\def\pow{\textsf{pow}}
\def\eps{\varepsilon}
\def\ve{\varepsilon}
\def\C{\mathcal{C}}
\newtheorem{theorem}{Theorem}
\newtheorem{lemma}[subsection]{Lemma}
\theoremstyle{definition}
\newtheorem{proposition}[subsection]{Proposition}
\theoremstyle{remark}
\numberwithin{equation}{section}
\begin{document}

\title[Solutions of Schr\"odinger equation with tetrahedral symmetry]{Solutions of Schr\"odinger equations\\ with symmetry in  orientation preserving tetrahedral group}

\author{Ohsang Kwon}
\address{Department of Mathematics, Chungbuk National University, Cheongju, South Korea}
\email{ohsangkwon@chungbuk.ac.kr}
\thanks{Ohsang Kwon was supported by Basic Science Research Program through the National Research Foundation of Korea(NRF) funded by the Ministry of Education(NRF-2020R1I1A3073436)}

\author{Min-Gi Lee}
\address{Department of Mathematics, Kyungpook National University, Daegu, South Korea}
\email{leem@knu.ac.kr}
\thanks{Min-Gi Lee was supported by the National Research Foundation of Korea(NRF) grant funded by the Korea government(MSIT) (No. 2020R1A4A1018190, 2021R1C1C1011867).
}

\subjclass[2020]{Primary 35J50, 35Q55, 35B40 ; Secondary  35B45, 35J40}

\date{\today}


\keywords{Coupled Schrodinger system, segregation, vector solution, distribution of bump, energy expansion}

\begin{abstract}
We consider the nonlinear Schr\"odinger equation
\begin{equation*}
\Delta u = \big( 1 +\ve V_1(|y|)\big)u -   |u|^{p-1}u
  \quad \text{in} \quad \mathbb{R}^N, \quad N\ge 3, \quad p \in \left(1, \frac{N+2}{N-2}\right).\end{equation*}
The phenomenon of pattern formation has been a central theme in the study of nonlinear Schr\"odinger equations. However, the following nonexistence of $O(N)$ symmetry breaking solution is well-known: if the potential function is radial and nondecreasing, any positive solution must be radial. Therefore, solutions of interesting patterns can only exist after violating the assumptions.

$O(N)$ symmetry breaking solutions have been presented by Wei and Yan \cite{wei_yan_2014}. Symmetry groups of regular polygons describe their solution patterns. Ever since work of Wei and Yan, there have been substantial generalizations but solutions with higher dimensional symmetry has not been constructed. In this study, the existence of nonradial solutions whose symmetry group is a discrete subgroup of $O(3)$, more precisely, the orientation-preserving regular tetrahedral group is shown.

\end{abstract}

\maketitle



\section{Introduction} \label{sec:intro}

We consider the Schr\"odinger equation
\begin{equation} \label{maineq}
\Delta u = \big( 1 +\ve V_1(|y|)\big)u -   |u|^{p-1}u
  \quad \text{in} \quad \mathbb{R}^N, \quad N\ge 3, \quad p \in \left(1, \frac{N+2}{N-2}\right)\end{equation}
parametrized by a small constant $\ve>0$. $V_1:\mathbb{R}^N \rightarrow \mathbb{R}$ is a contribution to the potential function that is bounded and radially symmetric. We fix a three dimensional subspace $\mathbb{R}^3$ of $\mathbb{R}^N$. Our objective is to construct a solution for \eqref{maineq} where the radial symmetry is broken in such a way the solution peaks at the four vertices of a regular tetrahedron embedded in the subspace. The barycenter of the tetrahedron is at the origin and its diameter is sufficiently large, accordingly as $\ve$ is chosen small.

Our study is about questions arising from exploring the symmetry, or breaking it, of solutions to nonlinear Schr\"odinger equations. By the work of  Gidas et al. \cite{GNN} via the moving plane method,
the following nonexistence of $O(N)$ symmetry breaking solution is well-known: if the potential function is radial and nondecreasing in the radial variable, any positive solution must be radial. Therefore, solutions of interesting patterns, such as those whose symmetry group is a discrete subgroup of $O(N)$, can only exist after violating the assumptions. 
These observations pose questions that under which circumstances what variety of solution patterns could appear. 

Wei and Yan \cite{wei_yan_2014} presented a remarkable result for the potential \eqref{intro-asymp} in the below that there are infinitely many symmetry breaking solutions. Specifically, let $x_i=(z_i,\mathbf{0})$, $i=1,\cdots,k$ for some $k$, where
$$z_i= \left( r \cos\left(\frac{2(i-1)\pi}{k}\right), ~r \sin\left(\frac{2(i-1)\pi}{k}\right)\right)\in \mathbb{R}^2 \quad \text{for $i=1,\cdots,k$}.$$
Then for every $k$ greater than a certain $k_0$ and radius $r$ sufficiently large accordingly, there is a solution $u$ that has segregated $k$ peaks over the specified circle.

In this study, we construct a solution with tetrahedral symmetry, specifically, solution of the form $W^h + \phi$ with $\phi$ of small norm, where $W^h=\displaystyle\sum^4_{i=1}  U_0(y-ht_i)$, $t_i$, $i=1,2,3,4$ are the four vertices of a regular tetrahedron, and $h>0$ denotes the parameter for diameter, and ${U}_0$ denotes the unique  {positive} radially symmetric solution (see \cite{Kwong}) of
\begin{equation*}
  \Delta {U}_0 - {U}_0+   ({U}_0)^p = 0, \quad \text{whose maxima occurs at the origin.}
\end{equation*}

The existence of such a solution reveals two key points on this subject. First, a pattern of tetrahedral symmetry, a discrete subgroup of $O(3)$, is presented. Second, having a radial symmetry breaking solution is not necessarily accompanied by the nondecreasing criterion violation {\it to some substantial extent}. By this, we mean that while at $\ve = 0$ no such solution can exist, and once the nondecreasing criterion is violated by adding a perturbation of size $\ve>0$ (see (A1) and (A2)), such a solution exists no matter how small $\ve$ is. This reveals that the  nonexistence of a radial symmetry breaking solution is a bifurcative phenomenon. 

Precise conditions on the first order contribution $V_1$ of the potential function is as follows.
\begin{itemize}
   \item[(A1)] ${V}_1$ is bounded, smooth, and $V_1(y)=V_1(|y|)$.
   \item[(A2)] ${V}_1(y) = \frac{a}{|y|^m} + O\left(\frac{1}{|y|^{m+\theta}}\right)$ as $|y| \rightarrow\infty$ for some $a>0$, $m>0$, $\theta>0$.
 \end{itemize}
In particular, a problem with the potential function has a connection to the nonlinear eigenvalue problem
\begin{equation} \label{eig}
 \Delta v = \lambda^2 \Big(\hat{V}(|y|)v - |v|^{p-1}v\Big) \quad \text{in} \quad \mathbb{R}^N,
\end{equation}
studied by Ambrosetti and Badiale \cite{AB_1998}, Byeon and Lee \cite{by_2013}, Rabinowitz \cite{paul_rabinowitz_1992}, and Stuart \cite{stuart}. Here,
\begin{equation} \label{intro-asymp}
 \hat{V}(y) = V_0+ \frac{a}{|y|^m} + O\left(\frac{1}{|y|^{m+\theta}}\right) \quad \text{as $|y| \rightarrow \infty$}
\end{equation}
for some positive constants $V_0$, $a$, $m$, and $\theta$. If $V_0=1$, ${V^{\lambda}}(y)=\hat{V}\left(\frac{y}{\lambda}\right)$, and $\lambda^m = \ve$, by the change of variables $v(y)=u(\lambda y)$, the nonlinear eigenvalue problem \eqref{eig} is included in our framework \eqref{maineq}, (A1), and (A2).


The research on this matter has been substantially generalized. For brevity, we summarize the generalizations into three directions, and focus on the last direction. The first is to seek results on the system for multi species problem, and the second is to weaken the assumptions on potentials. For three species system, Peng et al. \cite{peng_wang_wang_2019} constructed nonradial solutions, where the potentials are all positive constants (thus, nondecreasing). Their surprising results revealed that the interactions of species can provide a chance to have a symmetry-breaking solution under the exertion of a  nondecreasing and  radially symmetric potential function. For the second direction, Peng and Wang \cite[Theorem 1.1, 1.2]{peng_wang_2013} considered a problem where only one of two potentials violates the nondecreasing criterion. See also Wang et al. \cite{wang_zhao_2017}, Kwon et al. \cite{kwon-lee-lee_2022}, and Long et al. \cite{long_tang_yang_2020}.

Yet another direction is generalizing the solution pattern. Lin and Peng \cite{lin_peng_2014} considered a pattern for the three species problem such that two species peaks over a circle, whereas one species peaks at the origin. See also Kwon et al. \cite{kwon-lee-lee_2022}. In a multispecies problem, peaks of species may or may not overlap. Peng and Wang \cite{peng_wang_2013} generated both cases: solutions of patterns where two species peak in a synchronized manner at shared sites or in a segregated manner at respective sites over a circle. Zhen \cite{zheng_2017} also studied in this direction.

Any result known to authors other than those with peaks over a circle or at the origin, thus of two dimensional in nature, is only the work of Duan and Musso \cite{dm}. Duan and Musso \cite{dm} considered peak points $\bar{x}_j = (\bar{y}_j,\mathbf{0})$, and $\underline{x}_j = (\underline{y}_j,\mathbf{0})$, where $\bar{y}_j$ and $\underline{y}_j$ are in $\mathbb{R}^3$ for $j=1,\cdots,k$. They are
\begin{equation} \label{pts}
\begin{aligned}
 \bar{y}_j &= r \, \left( \sqrt{1-h^2} \cos\left(\frac{2(j-1)\pi}{k}\right), \sqrt{1-h^2} \sin\left(\frac{2(j-1)\pi}{k}\right),h\right)\\
 \underline{y}_j &= r \, \left( \sqrt{1-h^2} \cos\left(\frac{2(j-1)\pi}{k}\right), \sqrt{1-h^2} \sin\left(\frac{2(j-1)\pi}{k}\right),-h\right)
\end{aligned}
\end{equation}
for a parameter $h>0$. The parameters where solutions are searched were set as $h = O\left(\frac{1}{k}\right)$, and $r = O\left(k\log k\right)$ as $k \rightarrow \infty$ and thus the two circles have to be sufficiently close as $k$ increases. 
The solution structure can be described as follows. As seen in \eqref{pts}, in the $\mathbf{R}^3$ subspace, peak points lies on top and bottom circles of a cylinder, instead of being on a sphere.

Existences established under the assumptions where the potential is not radial symmetric are also notable in the study for nonradial solutions. Ao and Wei \cite{ao_wei_2014} studied \eqref{maineq} for a nonradial potential, where the nonlinearity can be further generalized. See also Cerami et al. \cite{cerami_passaseo_solimini_2013}. Solutions that are nonradial only in the first two coordinates have been constructed without the small parameter $\eps$ in del Pino et al. \cite{DWY}.

In this study, we turn back our attention to a scalar Schr\"odinger equation, and present a solution whose symmetry group is a discrete subgroup of $O(3)$, precisely, the orientation-preserving regular tetrahedral group. Including the work of Duan and Musso \cite{dm}, existing solutions have symmetry in a discrete subgroup of $O(2)$.

Although it is a technical matter, in the below we present differences we encounter in generalizing ideas of Wei and Yan \cite{wei_yan_2014} for higher dimensional configurations. The two key ingredients in Wei and Yan \cite{wei_yan_2014} are the use of symmetry in analyzing the associated linearized operator about the backbone profile, and the space subdivision. The first difference is that the backbone profile is of three dimensional, having the tetrahedral symmetry. The associated linearized operator and its kernel are to be analyzed in a way not similar to those associated with $O(2)$. Thus, Lemma \ref{inu0} of the invertibility is the main finding of this study, and this is the place where the key roles of the orientation-preservint group elements are played.
%
%
%
%
For the second matter of subdivision of space, we observe that one of technical obstacles working higher than two dimensions lies in the fact that while a circle can be subdivided into congruent arcs as many as one wants, $2$-sphere cannot be. Indeed, we see pentagons and hexagons alternatively patched together for $C_{60}$ Buckminsterfullerene (or a soccer ball). Another way to put this is while in two dimensions we have a regular $k$-gon for any large positive integer, we do not have a similar analog in higher dimensions. We give further detail below.

In higher dimensions, it is possible to subdivide the entire space into four congruent closed cones. We simply consider a $3$-simplex, particularly a regular tetrahedron whose vertices are $t_i$, $i=1,2,3,4$ specified in \eqref{ti}. Denoting $y=(y_1,y_2,y_3, y')\in \mathbb{R}^3\times \mathbb{R}^{N-3}$, the four cones are specified by hyperplanes as follows:
\begin{align}\begin{cases}\label{defo}
 \C_1=\left\{ y \in \mathbb{R}^N \ \left| \right.  y_2+y_3\ge 0,\ y_1+y_3\ge0,\ y_1+y_2\ge0  \right\},\\ \\
\C_2=\left\{ y \in \mathbb{R}^N \ \left| \right.  y_2+y_3\le 0,\ y_1-y_2\ge0,\ y_1-y_3\ge0  \right\},\\ \\
\C_3=\left\{ y \in \mathbb{R}^N \ \left| \right.  y_1+y_3\le 0,\ y_1-y_2\le0,\ y_2-y_3\ge0  \right\},\\ \\
\C_4=\left\{ y \in \mathbb{R}^N \ \left| \right.  y_1+y_2\le 0,\ y_1-y_3\le0,\ y_2-y_3\le0  \right\}.
\end{cases}\end{align}
At the center of cone $\C_i$ is the half ray emanated from the origin that passes the vertex $t_i$. It can be checked that every point in $\mathbb{R}^N$ is a nonnegative scalar multiple of the convex combination
$$\lambda_1 t_1 + \lambda_2 t_2 + \lambda_3 t_3 + \lambda_4 t_4$$
and $y\in \C_i$ if and only if $\lambda_i$ is the maximum weight. All points in a half ray emanated from the origin shares the same indices of maximum weight. One sees that the interiors of $\C_i$ are pairwise disjoint, and it is not difficult to see that   $\displaystyle\cup_{i=1}^4 \C_i=\mathbb{R}^N$.

Next, the symmetry group of a regular tetrahedron is represented by orthogonal matrices. The $12$ elements of the subgroup of $SO(N)$ are listed in \eqref{Tdef}-\eqref{Adef}, comprising orientation-preserving members in the tegrahedral group. Its structure can be summarized as follows.
\begin{enumerate}
 \item $\{T_1, T_2, T_3, T_4\}$, and $\{T_1, T_5, T_9\}$ forms two subgroups of $4$ and $3$ elements. ($T_1 = I$)
 \item For $k=1,2,3,4$, $T_k\big|_{C_k}$ is bijective from $\C_k$ to $\C_1$, $(T_k)^{-1} = T_k$.
 \item For $k=1,5,9$, $T_k\big|_{C_1}$ is an automorphism for $\C_1$. $\{T_1, T_5, T_9\}$ is a cyclic group with $(T_5)^2 = T_9$, $(T_5)^3 = T_1$, and
$$\{T_1, T_5, T_9\} = (T_2)^{-1}\{T_2,T_6,T_{10}\} = (T_3)^{-1}\{T_3,T_7,T_{11}\}= (T_4)^{-1}\{T_4,T_8,T_{12}\}.$$
In particular, the upper left block matrix of $T_5$ in \eqref{Tdef} is
$$A_5 = \begin{pmatrix}
         0 & 1 &0 \\ 0 & 0 & 1\\ 1 & 0 & 0
        \end{pmatrix}
      = \begin{pmatrix}
         \tfrac{1}{\sqrt{3}} & -\tfrac{1}{\sqrt{2}} & -\tfrac{1}{\sqrt{6}} \\ \frac{1}{\sqrt{3}} & \tfrac{1}{\sqrt{2}} & -\tfrac{1}{\sqrt{6}}\\ \frac{1}{\sqrt{3}} & 0 & \tfrac{\sqrt{2}}{\sqrt{3}}
        \end{pmatrix}
        \begin{pmatrix}
         1 & 0 &0 \\ 0 & -\frac{1}{2} & \frac{\sqrt{3}}{2}\\ 0 & -\frac{\sqrt{3}}{2} & -\frac{1}{2}
        \end{pmatrix}
        \begin{pmatrix}
         \tfrac{1}{\sqrt{3}} & \tfrac{1}{\sqrt{3}} & \tfrac{1}{\sqrt{3}} \\ -\tfrac{1}{\sqrt{2}} & \tfrac{1}{\sqrt{2}} & 0\\ -\tfrac{1}{\sqrt{6}} & -\tfrac{1}{\sqrt{6}} & \tfrac{\sqrt{2}}{\sqrt{3}}
        \end{pmatrix},$$
    or $A_5$ is a $\frac{2\pi}{3}$ rotation in $\mathbb{R}^3$ about one dimensional subspace passing the vertex $t_1$.
\end{enumerate}
The $12$ symmetries play a significant role in proving the invertibility in Lemma \ref{inu0}. In particular, $\{T_1, T_2, T_3, T_4\}$ lets us symmetrize functions with respect to domains $\C_i$, $i=1,2,3,4$ so that estimates are systematically obtained. Notably, among functions possesing symmetry $\{T_1, T_5, T_9\}$, the kernel of the linearized operator in \eqref{l0} is one dimensional, which can be taken care of.

Once the kernel is shown to be one dimensional, the reduction method is applied. In the work of Wei and Yan \cite{wei_yan_2014}, the number of peaks becomes very large as the peaks are placed far away from the origin. Precisely, the radius $r = O(k\log k)$ as $k \rightarrow \infty$. Heuristically, the small parameter $\ve$ compensates for the shortage of the number of peaks. With the interpretation of the nonlinear eigenvalue problem \eqref{eig}, considering the scaling $v(y)=u(\ve^{\frac{1}{m}} y)$, the peak of $v$ becomes a plateau as $\ve \rightarrow 0$, and the four individual peaks $U_0\big(\ve^{\frac{1}{m}}(y-ht_i)\big)$ have large mass. 

Now, we state our main theorem. The interval $S_\ve$, and constant $\gamma>0$ in the statement of Theorem \ref{mainthm} are described in Sections \ref{notions} and Section \ref{sec:result}.
\begin{theorem} \label{mainthm} Assume the potential function $V(|y|) = 1 + \ve V_1(|y|)$ with $V_1$ satisfying (A1) and (A2). Then $\exists \ve_0$ such that for $\ve\in (0,\ve_0)$, there exists $h\in S_\ve$ and a solution of the form $W^{h} +\phi$ of \eqref{maineq}. We have $\|\phi\|_{H^1}\le C\ve^{\gamma}$ for some $C>0$.
\end{theorem}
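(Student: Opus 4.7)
The plan is to carry out a Lyapunov--Schmidt reduction in the subspace $H^1_{\mathrm{sym}} \subset H^1(\mathbb{R}^N)$ of functions invariant under the twelve orientation-preserving symmetries $T_1,\ldots,T_{12}$ of the regular tetrahedron listed in the introduction. Writing $u = W^h + \phi$ with $\phi \in H^1_{\mathrm{sym}}$ converts \eqref{maineq} into
\begin{equation*}
 L_h \phi = E_h + N_h(\phi),
\end{equation*}
where $L_h\phi := -\Delta\phi + (1+\ve V_1)\phi - p(W^h)^{p-1}\phi$ is the linearization at $W^h$, $E_h := -\Delta W^h + (1+\ve V_1)W^h - (W^h)^p$ is the consistency error, and $N_h(\phi) := (W^h+\phi)^p - (W^h)^p - p(W^h)^{p-1}\phi$ collects the superlinear remainder. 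Tetrahedral symmetry ensures that $W^h$, $E_h$, and $N_h(\phi)$ all remain in $H^1_{\mathrm{sym}}$, so the problem may honestly be restricted to this space.

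The first key step is to invoke Lemma \ref{inu0} to control the near-kernel of $L_h$ on $H^1_{\mathrm{sym}}$. In the ambient space the kernel of the linearization at a single bump $U_0$ is spanned by the $N$ translational modes, so naively the kernel at $W^h$ would be $4N$-dimensional. Imposing the twelve tetrahedral symmetries kills all rotational and translational directions and leaves only the one-dimensional family generated by simultaneous radial dilation of the four peaks, $Z_h := \tfrac{d}{dh} W^h = \sum_{i=1}^{4} t_i \cdot \nabla U_0(\,\cdot\, - h t_i)$, which itself lies in $H^1_{\mathrm{sym}}$. Writing $H^1_{\mathrm{sym}} = \mathrm{span}(Z_h) \oplus \mathcal{H}^{\perp}_h$, Lemma \ref{inu0} then yields a uniform bound $\|L_h^{-1}\|_{\mathcal{H}^{\perp}_h \to \mathcal{H}^{\perp}_h} \le C$ for all $h \in S_\ve$, which is the crux of the whole argument.

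Next I would estimate $E_h$ using the cone decomposition \eqref{defo}. Inside each $\C_i$ the sum $W^h$ is dominated by the single translate $U_0(\,\cdot\, - h t_i)$, with the other three bumps exponentially small in $h$; combined with (A1)--(A2) this yields $\|E_h\|$ of order $\ve\, h^{-m} + e^{-\sigma h}$ for some $\sigma>0$, in a suitable weighted norm compatible with Lemma \ref{inu0}. A standard contraction-mapping argument in $\mathcal{H}^\perp_h$ then produces a unique $\phi = \phi_h \in \mathcal{H}^\perp_h$ solving the projected equation, depending smoothly on $h$ and satisfying $\|\phi_h\|_{H^1} \le C\ve^{\gamma}$.

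Finally, to close the reduction I would analyze the scalar reduced problem: find $h \in S_\ve$ such that the component of $L_h\phi_h - E_h - N_h(\phi_h)$ along $Z_h$ also vanishes. Equivalently, one studies critical points of the reduced energy $F(h) := J(W^h + \phi_h)$, where $J$ is the energy functional associated with \eqref{maineq}. Its expansion contains two competing contributions: the pairwise interaction energy between neighboring bumps $U_0(\,\cdot\, - h t_i)$ and $U_0(\,\cdot\, - h t_j)$, which is repulsive and exponentially small in $h$, versus the $V_1$-contribution which, by (A2), behaves to leading order like $c\,\ve\,a\,h^{-m}$ with a positive constant $c$ depending on $U_0$, and is attractive. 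Balancing these two terms on the specified interval $S_\ve$ produces an interior critical point $h_\ve \in S_\ve$, at which the full equation \eqref{maineq} is solved by $W^{h_\ve}+\phi_{h_\ve}$. The hard part is unquestionably Lemma \ref{inu0}: for the existing $O(2)$-symmetric constructions the analogous invertibility reduces to a Fourier-series computation on the circle, whereas in the tetrahedral setting one must instead combine the cone decomposition \eqref{defo} with the action of the cyclic subgroup $\{T_1,T_5,T_9\}$ to rule out all but the single direction spanned by $Z_h$, and this nonstandard spectral analysis is what makes the higher-dimensional symmetry-breaking construction accessible at all.
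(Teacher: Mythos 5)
Your proposal is correct and follows essentially the same Lyapunov--Schmidt strategy as the paper: work in the tetrahedrally symmetric space $H_s$, split off the one-dimensional near-kernel spanned by $\partial_h W_h$, invoke Lemma \ref{inu0} for uniform invertibility on the orthogonal complement $E_h$, run a contraction in the ball $B_{\ve,h}$, and then maximize the reduced energy $\mathfrak{F}_\ve(h)$ over $S_\ve$ by balancing the $\ve\,a\,h^{-m}$ potential term against the $h^{-(N-1)/2}e^{-2\sqrt{2}h}$ bump-interaction term. The only cosmetic difference is that the paper closes the reduction via a Lagrange-multiplier argument for the maximizer rather than directly projecting onto $Z_h$, but the underlying variational identity is the same.
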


The remainder of this article is organized as follows. In Section \ref{notions}, we introduce some notations. We prove the main theorem in Section \ref{sec:result}.

\section{Notations} \label{notions}

First, we let ${U}_0$  be  the unique  {positive} radially symmetric solution (see \cite{Kwong}) of
\begin{equation*}
  \Delta {U}_0 - {U}_0+   ({U}_0)^p = 0,
\end{equation*}
whose maxima occurs at the origin. There exists a constant $\alpha>0$ satisfying (for example, see \cite{AMN})
\begin{equation}\label{exp0}     {\lim_{|y|\to\infty}\left( e^{|y|} |y|^{ \left(\frac{N-1}{2}\right)}{U}_0(|y|)\right)=\alpha, } \end{equation}
   {and thus there is a constant $M>0$ satisfying}
\begin{equation}\label{exp} {U}_0(|y|) \le Me^{- |y|}\min\{|y|^{-\left(\frac{N-1}{2}\right)},1\}\quad   {\textrm{for any}\ y\in\mathbb{R}^N.}\end{equation}

Let $t_i\in \mathbb{R}^3\times \mathbb{R}^{N-3}$ be the vertices of a tetrahedron such that
\begin{equation}\begin{cases}\label{ti}t_1=(~1,~1,~1,~0,\cdots,0), \\  t_2=(1,-1,-1,0,\cdots,0), \\ t_3=(-1,1,-1,0,\cdots,0),\\ t_4=(-1,-1,1,0,\cdots,0),\end{cases}\end{equation}and
 \[U_{h,i}(y):={U}_0(y-{h{t_i}})\quad\mbox{for}\  i=1,\cdots,4,\] where \begin{equation}h\in S_\ve:= \left[ \left(\frac{1}{2\sqrt{2}}-\beta_0\right)\ln\frac{1}{\varepsilon},  \left(\frac{1}{2\sqrt{2}}+\beta_0\right)\ln\frac{1}{\varepsilon}\right].\end{equation} Here,  the constant $\beta_0\in\left(0,\frac{1}{2\sqrt{2}}\right)$ is a small constant.

The subdivision of $\mathbb{R}^N$ into $\C_i$, $i=1,2,3,4$ is as in \eqref{defo}. As seen in \eqref{defo}, six hyperplanes subdivide $\mathbb{R}^N$. For each $\{i,j\}$, $i\ne j$, we denote the plane $P_{ij}(=P_{ji})$ the one contains the midpoint $ \frac{1}{2}(t_i + t_j)$ and the two vertices $\{t_1, t_2, t_3, t_4\} \setminus \{t_i, t_j\}$. The boundary of $\C_i$ consists of three planes $P_{ij}$, $j\in \{1,2,3,4\}\setminus\{i\}$.




    {The linear transformation defined by elements in the regular tetrahedron symmetry group maps the regions ${\C}_i$ to each other.
Let $\mathbf{G}=\{T_{i}\  |\  1\le i \le12\}$} be a subgroup of regular tetrahedron symmetry group (see \cite{XC}), where $T_{i}$ is given by
\begin{align}\label{Tdef}T_{i}=\left(\begin{array}{ll}A_{i} & 0_{3,N-3} \\ 0_{N-3,3} & I_{N-3}\end{array}\right),
 \end{align} where $0_{3,N-3}$ is the $3\times (N-3)$ zero matrix,  $0_{N-3,3}$ is the $(N-3)\times 3$ zero matrix,   $I_{N-3}$ is the  identity matrix of size $(N-3)$, and
 \begin{equation} \label{Adef}
\begin{aligned}
 A_1&=\left(\begin{array}{lll}1&0 &0\\0 &1&0\\ 0&0&1\end{array}\right),
&A_5&=\left(\begin{array}{lll}0&1&0\\0 &0&1\\ 1&0&0\end{array}\right),
&A_9&=\left(\begin{array}{lll}0&0 &1\\1 &0&0\\ 0&1&0\end{array}\right),
\\
A_2&=\left(\begin{array}{lll}1&   0 &~ 0\\0 &-1&~ 0\\ 0&   0&-1\end{array}\right),
&A_6&=\left(\begin{array}{lll}0&1 &~ 0\\0 &0& -1\\ -1&0&~0\end{array}\right),
&A_{10}&=\left(\begin{array}{lll}0&~ 0 &1\\-1 &~0&0\\ 0& -1&0\end{array}\right).
\\
A_3&=\left(\begin{array}{lll}-1&0 & ~0\\0 &1& ~0\\ 0&0&-1\end{array}\right),
&A_7&=\left(\begin{array}{lll}~0&-1 &0\\~0 &~0&1\\ -1& ~0&0\end{array}\right),
&A_{11}&=\left(\begin{array}{lll}0&~0 & -1\\1 &~0& ~0\\ 0&-1&~0\end{array}\right),
\\
A_4&=\left(\begin{array}{lll}-1& 0 &0\\0 &-1&0\\ 0& 0&1\end{array}\right),
&A_8&=\left(\begin{array}{lll}0& -1&~0\\0 &0&-1\\ 1& 0&~0\end{array}\right),
&A_{12}&=\left(\begin{array}{lll}0&   0 & -1\\-1 &0&~ 0\\ 0&   1&~0\end{array}\right).
\end{aligned}
\end{equation}
   {We note that  $\mathbf{G}$ has  orientation-preserving symmetries; thus, $\textrm{det}(T_i)=1$ for $i=1,\cdots,12$. This symmetry property will be employed to extend a function defined on ${\C}_1$ to some well-defined function on other regions ${\C}_i$. Moreover, the   group structure of $\mathbf{G}$ is essential to prove the nondegenercy of a perturbed linearized operator (see  the proof of Lemma \ref{inu0}).
 We refer to \cite{XC} (or Appendix \ref{AA}) for  the multiplication table for the group $\mathbf{G}$. }

 Because $\mathbf{G}$ is a symmetry group of the tetrahedron, we have
 $ T_k^{-1} \,{\{t_1,t_2,t_3,t_4\}} =  \{t_1,t_2,t_3,t_4\}$. We define for each $k=1,\cdots,12$ and $i=1,2,3,4$
 \begin{equation} \label{tki}
  t_{k_i} := T_k^{-1} \,t_i.
 \end{equation}

The norm of $H^1( \mathbb{R}^N)$ is defined as follows:
 $$\|\cdot\|:=\sqrt{\left<\cdot,\cdot\right>},$$ where
$$  \left<u,v\right>:= \int_{ \mathbb{R}^N}\left( \nabla u\cdot\nabla v + uv\right)dy.$$
 We fix a closed subspace $H_s$ of $H^1( \mathbb{R}^N)$ possesing the following symmetry:
\begin{equation*}\begin{aligned}
 H_s:= \Big\{ u \in H^1 (\mathbb{R}^N) ~\Big|~ & u(T_{i}   y) = u(y) \ \mbox{for}\  1\le i\le 12, \\
 &  \mbox{and}\  u(y)=u(y_1,\cdots, y_N) \ \mbox{is even in}\   y_n, 3< n\le N.\Big\}\end{aligned}
\end{equation*}
We  define the function $   {{W}_{h}}$ as follows:
\begin{equation}\label{defw}   {{W}_{h}}(y):={\sum^4_{i=1}} U_{h,i}(y)={\sum^4_{i=1}}  U_0(y-ht_i).\end{equation}
It is easy to see that $   {{W}_{h}}\in H_s$ because the    {linear transformation} definded by $T_i$ is bijective and  $\Pi_{N-3}(t_i)=(0,\cdots,0)\in \mathbb{R}^{N-3}$, $i=1,2,3,4$, where $\Pi_{N-3}:\mathbb{R}^3\times \mathbb{R}^{N-3}\rightarrow \mathbb{R}^{N-3}$ is the projection map defined by $\Pi_{N-3}(y_1, y_2,y_3, y_4,\cdots,y_{N})=(y_4,\cdots,y_{N})$. 
   {Notably,  the function $W_h$ also satisfies}
\begin{equation}\label{eq_wh}   {\Delta W_h - W_h+\sum_{i=1}^4 U_{h,i}^p=0.}\end{equation}

Now we  define a  closed subspace ${E}_h$, where
\begin{equation}
 \begin{aligned}
    {E}_h:= \left\{ \phi \in H_s ~\Big|~  \int_{\mathbb{R}^N}\sum_{i=1}^4  U_{h,i}^{p-1} \frac{\partial U_{h,i}}{\partial h} \,\phi~dy = 0 \right\},
 \end{aligned}
\end{equation}
 and we equip ${E}_h$ with the norm $\|\cdot\|$. We find it convenient to introduce a radial function
 \begin{equation}\label{deff}f(r)=-\frac{U_0(r)^{p-1} U_0'(r)}{r} \quad \quad \quad  \text{for $r>0$}\end{equation}
so that for $i=1,2,3,4$ \begin{equation*} U_{h,i}^{p-1} \frac{\partial U_{h,i}}{\partial h}
= f(|y-ht_i|)(y-ht_i)\cdot t_i.\end{equation*}
We define $\varphi^*_h:=\displaystyle\sum_{i=1}^4 U_{h,i}^{p-1} \frac{\partial U_{h,i}}{\partial h}$. For later purpose, we verify that $\varphi^*_h$ is in $H_s$.
\begin{lemma} $\varphi^*_h \in H_s.$
\end{lemma}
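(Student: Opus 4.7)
The plan is to exploit the fact that the symmetry group $\mathbf{G}$ permutes the vertex set $\{t_1,\dots,t_4\}$, so that each $T_k$ simply reshuffles the four summands that comprise $\varphi^*_h$, and the integrand is formally invariant under this reshuffling. Two preliminaries are needed. First, since every $t_i\in\mathbb{R}^3\times\{0\}$ and since $T_k^{-1}\{t_1,\dots,t_4\}=\{t_1,\dots,t_4\}$, the assignment $i\mapsto\sigma_k(i)$ determined by $T_k^{-1}t_i=t_{\sigma_k(i)}$ is a permutation of $\{1,2,3,4\}$ for every $T_k\in\mathbf{G}$; this is the $\sigma_k$ already implicit in the notation $t_{k_i}$ of \eqref{tki}. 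Second, the exponential decay \eqref{exp} of $U_0$, together with the analogous decay of $\nabla U_0$ obtained from standard elliptic estimates applied to $\Delta U_0-U_0+U_0^p=0$, ensures that $U_{h,i}^{p-1}\partial_h U_{h,i}=f(|y-ht_i|)(y-ht_i)\cdot t_i$ and its gradient lie in $L^2(\mathbb{R}^N)$, so $\varphi^*_h\in H^1(\mathbb{R}^N)$.

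Next I would verify the twelvefold invariance. Orthogonality of $T_k$ and radiality of $U_0$ give
$$U_{h,i}(T_k y)=U_0\bigl(T_k(y-hT_k^{-1}t_i)\bigr)=U_0(y-ht_{\sigma_k(i)})=U_{h,\sigma_k(i)}(y),$$
and differentiating in $h$ (which commutes with evaluation at $T_k y$) yields the companion identity $(\partial_h U_{h,i})(T_k y)=\partial_h U_{h,\sigma_k(i)}(y)$. Multiplying these two identities pointwise and summing over $i$, then relabeling by $j=\sigma_k(i)$, one obtains
$$\varphi^*_h(T_k y)=\sum_{i=1}^4 U_{h,\sigma_k(i)}^{p-1}(y)\,\partial_h U_{h,\sigma_k(i)}(y)=\sum_{j=1}^4 U_{h,j}^{p-1}(y)\,\partial_h U_{h,j}(y)=\varphi^*_h(y).$$

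Finally, the evenness in $y_n$ for $3<n\le N$ is immediate: since $(t_i)_n=0$ in that range, $|y-ht_i|^2$ depends on $y_n$ only through $y_n^2$, so each $U_{h,i}$, and therefore each summand of $\varphi^*_h$, is even in $y_n$. The only non-routine bookkeeping is tracking the permutation $\sigma_k$, but this is exactly the mechanism that already placed $W_h$ in $H_s$ a few lines earlier, so no real obstacle is expected. A slicker restatement of the whole argument is to note $\varphi^*_h=\tfrac{1}{p}\,\partial_h\bigl(\sum_i U_{h,i}^p\bigr)$ and invoke the fact that $H_s$ is closed under such parametric differentiation, but the direct calculation above has the advantage of being self-contained.
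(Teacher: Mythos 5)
Your argument is correct and follows essentially the same route as the paper: both exploit that each $T_k$ permutes the vertices $t_i$ and is orthogonal, so the four summands of $\varphi^*_h$ are merely reshuffled under $y\mapsto T_k y$, and both note the evenness in $y_n$ for $3<n\le N$. The only cosmetic difference is that the paper transforms the combined integrand $f(|y-ht_i|)(y-ht_i)\cdot t_i$ in one step, whereas you transform the two factors $U_{h,i}^{p-1}$ and $\partial_h U_{h,i}$ separately and multiply; this yields the same calculation.
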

\begin{proof}
$\displaystyle\sum_{i=1}^4  U_{h,i}^{p-1}(y) \frac{\partial U_{h,i}(y)}{\partial h}$ is even in $y_i$, $3<i\le N$, where $y=(y_1,\cdots,y_N)$. For each $k\in [1,12]$ integer, using notations in \eqref{tki} we have
\begin{align*}&\sum_{i=1}^4  U_{h,i}^{p-1} (T_k y)\frac{\partial U_{h,i}(T_k y)}{\partial h}\\ &=\sum_{i=1}^4 f(|T_k y-ht_i|)(T_ky-ht_i)\cdot t_i
\\&=\sum_{i=1}^4 f(|T_k( y-ht_{k_i})|)(T_k(y-ht_{k_i}))\cdot T_k(t_{k_i})
\\&=\sum_{i=1}^4 f(|y-ht_{k_i}|)(y-h_nt_{k_i})\cdot t_{k_i}
=\sum_{i=1}^4  U_{h,i}^{p-1} (y)\frac{\partial U_{h,i}(y)}{\partial h}.\end{align*}
\end{proof}
Notably, $\frac{\partial W_{h}}{\partial h}$ is an element of $H_s$ that is in the kernel of the following linearized problem:
\begin{equation}\label{peq_wh}   {
\Delta  \left(\frac{\partial W_{h}}{\partial h} \right) -\frac{\partial W_{h}}{\partial h}+p\left(\sum_{i=1}^4 (U_{h,i})^{p-1}\frac{\partial U_{h,i}}{\partial h}\right)=0.}\end{equation}
Thus,
$$\left<\frac{\partial W_{h}}{\partial h}, \phi\right>= p\int_{\mathbb{R}^N}\sum_{i=1}^4  U_{h,i}^{p-1} \frac{\partial U_{h,i}}{\partial h} \,\phi~dy$$
and thus $\phi$ being $L^2$-orthogonal to $\varphi_h^*$ is the same as $\phi$ being $H^1$-orthogonal to $\frac{\partial W_{h}}{\partial h}$. We write
$$E_h = \textsf{span} <\frac{\partial W_{h}}{\partial h}>^{\bot} \quad\textrm{in}\ \ H_s.$$

In the next section, we seek a solution  for \eqref{maineq}  with the form $     {{W}_{h}} + \phi$, where $\phi\in {E}_h$ is the perturbation   with small norm.

\section{Results} \label{sec:result}

The scheme to find a solution for \eqref{maineq} is based on the following observations. Suppose $   {{W}_{h}} + \phi $ is a solution of \eqref{maineq}, or $\phi$ formally solves
\begin{equation*}
\Delta \phi -{V_{\varepsilon}}(y) \phi +p \left(   {{W}_{h}}\right)^{p-1} \phi = {g}_{\ve,h}(\phi),
\end{equation*}
where $V_\ve(y) = 1 + \ve V_1(y)$ and
\begin{equation}\begin{aligned}\label{defg}
      {g}_{\ve,h}(\phi):=&  ({V_{\varepsilon}}-1)   {{W}_{h}}-  \left\{ \left|{{W}_{h}}+\phi\right|^{p-1}\left({{W}_{h}}+\phi\right)  -\sum_{i=1}^4 \left(U_{h,i}\right)^p - p \left(   {{W}_{h}}\right)^{p-1}\phi  \right\}.\end{aligned}
\end{equation}
Based on these observations, for a fixed $\phi \in {E}_h$, we consider the following linear functional $\ell_\phi$ on ${E}_h$ that is bounded:
$$\ell_\phi[\psi] := \int_{ \mathbb{R}^N}\left( \nabla \phi\cdot\nabla\psi + V_\ve \phi\psi - p  \left(   {{W}_{h}}\right)^{p-1} \phi\psi\right)dy.$$
This, in turn, via Riesz representation theorem, defines the linear operator $L_{\varepsilon, h}: {E}_h \rightarrow  {E}_h$ by the defining relation
\begin{equation}\label{l0}\left<L_{\varepsilon, h}(\phi),\psi\right>  := \ell_\phi[\psi]= \int_{ \mathbb{R}^N}\left( \nabla \phi\cdot\nabla\psi + V_\ve\phi\psi - p  \left(   {{W}_{h}}\right)^{p-1} \phi\psi\right)dy.\end{equation}
It is not difficult to see the following estimation for $L_{\ve,h}$.
\begin{lemma}   \label{inu00} There is a constant $C >0$, independent of $\ve>0$, such that for any $h\in S_\ve$,
$$     \|L_{\varepsilon, h} \phi\| \le C \|\phi\|    \quad \text{for all $\phi \in E_h$} .$$
\end{lemma}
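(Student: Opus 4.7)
The plan is to show that the bilinear form $\ell_\phi[\psi]$ defining $L_{\varepsilon,h}$ is uniformly bounded on $H^1\times H^1$, and then deduce the estimate for $\|L_{\varepsilon,h}\phi\|$ by testing against $\psi = L_{\varepsilon,h}\phi$ itself. Since $L_{\varepsilon,h}\phi$ is defined via Riesz representation, we have
\[
\|L_{\varepsilon,h}\phi\|^2 = \langle L_{\varepsilon,h}\phi, L_{\varepsilon,h}\phi\rangle = \ell_\phi[L_{\varepsilon,h}\phi],
\]
so it suffices to bound $|\ell_\phi[\psi]|$ by $C\|\phi\|\|\psi\|$ and cancel one factor of $\|L_{\varepsilon,h}\phi\|$.

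To bound $\ell_\phi[\psi]$, I would handle the three terms separately. The gradient term $\int \nabla\phi\cdot\nabla\psi\,dy$ and the potential term $\int V_\varepsilon\phi\psi\,dy$ are immediate from the Cauchy--Schwarz inequality, using that $V_\varepsilon = 1 + \varepsilon V_1$ is uniformly bounded in $\mathbb{R}^N$ by assumption (A1); together they contribute at most $(1 + \varepsilon_0\|V_1\|_\infty)\|\phi\|\|\psi\|$. For the nonlinear weight term $p\int (W_h)^{p-1}\phi\psi\,dy$, observe that by \eqref{exp} we have $U_0 \in L^\infty(\mathbb{R}^N)$, hence
\[
\|W_h\|_\infty \le \sum_{i=1}^4 \|U_0\|_\infty = 4\|U_0\|_\infty
\]
uniformly in $h \in S_\varepsilon$, so $(W_h)^{p-1}$ is uniformly bounded and Cauchy--Schwarz gives
\[
p\int_{\mathbb{R}^N} (W_h)^{p-1}|\phi\psi|\,dy \le p(4\|U_0\|_\infty)^{p-1}\|\phi\|_{L^2}\|\psi\|_{L^2} \le C\|\phi\|\|\psi\|.
\]

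Combining these bounds yields $|\ell_\phi[\psi]| \le C\|\phi\|\|\psi\|$ with $C$ independent of $\varepsilon \in (0,\varepsilon_0)$ and $h \in S_\varepsilon$, and taking $\psi = L_{\varepsilon,h}\phi$ then dividing by $\|L_{\varepsilon,h}\phi\|$ gives the desired estimate. There is no real obstacle here; the lemma is a standard continuity statement for a Schr\"odinger-type bilinear form, and the only point worth noting is the uniformity in $h$, which is immediate from the $L^\infty$ bound on $W_h$ that holds regardless of how the bumps are placed.
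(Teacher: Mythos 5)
Your proof is correct. The paper itself omits a proof of this lemma (it is introduced only with ``It is not difficult to see''), and your argument---Cauchy--Schwarz on each term of $\ell_\phi$, the uniform bound $\|V_\ve\|_\infty \le 1+\ve_0\|V_1\|_\infty$ from (A1), the uniform bound $\|W_h\|_\infty \le 4\|U_0\|_\infty$ from \eqref{exp}, and then testing $\ell_\phi$ against $\psi = L_{\ve,h}\phi$ via the Riesz defining relation \eqref{l0}---is precisely the routine argument the authors leave to the reader.
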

The next lemma shows that $L_{\ve,h}$ is invertible in $E_h$
\begin{lemma}   \label{inu0} There are  constants $\rho_0 >0$ and $\ve_0>0$ satisfying if $0<\ve<\ve_0$ and $h\in S_\ve$, then
$$    \rho_0 \|\phi\| \le  \|L_{\varepsilon, h} \phi\|    \quad \text{for all $\phi \in E_h$} .$$
\end{lemma}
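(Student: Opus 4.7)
The plan is to argue by contradiction. Suppose the estimate fails; then there exist sequences $\ve_n\downarrow 0$, $h_n\in S_{\ve_n}$, and $\phi_n\in E_{h_n}$ with $\|\phi_n\|=1$ yet $\|L_{\ve_n,h_n}\phi_n\|\to 0$. By the defining relation \eqref{l0},
\[
\int_{\mathbb{R}^N}\bigl(\nabla\phi_n\cdot\nabla\psi + V_{\ve_n}\phi_n\psi - pW_{h_n}^{p-1}\phi_n\psi\bigr)\,dy = o(1)\|\psi\|
\]
uniformly over $\psi\in E_{h_n}$. Testing with $\psi=\phi_n\in E_{h_n}$ itself, and using $V_{\ve_n}\to 1$ uniformly, one obtains $\int_{\mathbb{R}^N} W_{h_n}^{p-1}\phi_n^2\,dy\to 1/p$. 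The target is to show that this mass actually tends to $0$, giving a contradiction.

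For each $i\in\{1,2,3,4\}$ set $\tilde\phi_{n,i}(y):=\phi_n(y+h_n t_i)$, which has $H^1$-norm $1$. Pass to a subsequence with $\tilde\phi_{n,i}\rightharpoonup\phi_i^*$ weakly in $H^1(\mathbb{R}^N)$ and strongly in $L^2_{\mathrm{loc}}$. Because $|h_n(t_i-t_j)|\to\infty$ for $j\neq i$ and $U_0$ decays exponentially by \eqref{exp}, every peak other than the $i$-th vanishes locally after translation, and $\ve_n V_1\to 0$ uniformly. Testing the equation against compactly supported $\psi$ concentrated near $h_n t_i$ and passing to the limit gives
\[
\Delta\phi_i^* - \phi_i^* + pU_0^{p-1}\phi_i^* = 0 \quad\text{in } \mathbb{R}^N,
\]
so by Kwong's nondegeneracy result \cite{Kwong}, $\phi_i^*\in\mathrm{span}\{\partial_{y_j}U_0: 1\le j\le N\}$.

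The critical step is to collapse this $N$-dimensional kernel using the $\mathbf{G}$-symmetry inherited from $\phi_n\in H_s$. The stabilizer of $t_i$ in $\mathbf{G}$ is a cyclic group of order three consisting of $2\pi/3$-rotations about the axis through $t_i$ (for $i=1$ it is exactly $\{T_1,T_5,T_9\}$). Invariance of $\phi_n$ under this subgroup descends to invariance of $\tilde\phi_{n,i}$, as does evenness in $y_4,\dots,y_N$. Within $\ker L_0$, evenness kills $\partial_{y_j}U_0$ for $j>3$, while the rotational invariance forces the remaining $\mathbb{R}^3$-part to be a scalar multiple of the directional derivative $\partial_{\nu_i}U_0$ with $\nu_i:=t_i/|t_i|$; hence $\phi_i^*=\lambda_i\,\partial_{\nu_i}U_0$. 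The peak-permuting elements $T_2,T_3,T_4\in\mathbf{G}$ then yield the identities $\tilde\phi_{n,i}(w)=\tilde\phi_{n,1}(T_i w)$, which pass to the limit and force $\lambda_i=\lambda_1=:\lambda$ for every $i$. Finally, the defining orthogonality of $E_{h_n}$ reads $\int\varphi_{h_n}^*\phi_n\,dy=0$, which after the substitution $z=y-h_n t_i$ in the $i$-th summand becomes $\sum_i\int f(|z|)(z\cdot t_i)\,\tilde\phi_{n,i}(z)\,dz=0$. Exponential decay of $f$ permits passage to the weak limit; substituting $\phi_i^*=\lambda\,\partial_{\nu_i}U_0$ and using the radial symmetry of $U_0$ reduces each summand to the same nonzero constant multiple of $\lambda$, forcing $\lambda=0$ and hence every $\phi_i^*=0$.

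To close the contradiction, use the cone decomposition \eqref{defo}: on each $\C_i$, $W_{h_n}$ is dominated (up to exponentially small cross terms) by $U_{h_n,i}$, so after the change of variables $z=y-h_n t_i$,
\[
\int_{\C_i}W_{h_n}^{p-1}\phi_n^2\,dy \le C\int_{\mathbb{R}^N}U_0^{p-1}(z)\,\tilde\phi_{n,i}(z)^2\,dz + o(1).
\]
Splitting the latter integral into a large ball (where $\tilde\phi_{n,i}\to 0$ in $L^2$) and its complement (where $U_0^{p-1}$ is exponentially small while $\|\tilde\phi_{n,i}\|_{L^2}\le 1$) shows this tends to $0$. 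Summing over $i$ yields $\int W_{h_n}^{p-1}\phi_n^2\,dy\to 0$, contradicting the $1/p$-limit. I expect the main obstacle to be the symmetry-reduction step: showing that $H_s$-invariance combined with the single orthogonality in $E_h$ exhausts the kernel of the limit operator. This relies crucially on the $12$-element structure of $\mathbf{G}$ laid out after \eqref{defo}, namely the split between the vertex-stabilizer $\{T_1,T_5,T_9\}$ (which pins each $\phi_i^*$ to a one-dimensional subspace) and the vertex-transitive transversal $\{T_1,T_2,T_3,T_4\}$ (which ties the four coefficients $\lambda_i$ together so that one orthogonality suffices to kill them all).
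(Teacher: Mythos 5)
Your overall strategy is the same as the paper's (contradiction, normalization, translation to a peak, identification of the weak limit with a kernel element of the limit operator, symmetry plus the single orthogonality to kill it, then local compactness to contradict $\|\phi_n\|=1$), and your symmetry-reduction step — stabilizer $\{T_1,T_5,T_9\}$ pinning each limit to a multiple of $\partial_{\nu_i}U_0$, the peak-permuting $T_2,T_3,T_4$ tying the coefficients, and the $E_h$-orthogonality forcing them to vanish — is exactly how the paper concludes ($c_1=c_2=c_3$ and $\sum_i c_i=0$). But there is a genuine gap at the step you pass over in one sentence: ``testing the equation against compactly supported $\psi$ concentrated near $h_n t_i$.'' The hypothesis $\|L_{\ve_n,h_n}\phi_n\|\to 0$ only controls $\langle L_{\ve_n,h_n}\phi_n,\psi\rangle$ for $\psi\in E_{h_n}$, i.e.\ for test functions that are invariant under all twelve elements of $\mathbf{G}$, even in $y_n$ for $n>3$, \emph{and} orthogonal to $\varphi^*_{h_n}$. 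A local bump near one peak satisfies none of these, so the limit equation $\Delta\phi_i^*-\phi_i^*+pU_0^{p-1}\phi_i^*=0$ does not follow as stated, and without it the appeal to Kwong's nondegeneracy has nothing to act on.

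Repairing this is precisely the technical heart of the paper's proof, which your proposal omits: one must symmetrize the local test function over the stabilizer (the sum $\bar\psi(\cdot)+\bar\psi(T_5\cdot)+\bar\psi(T_9\cdot)$ in \eqref{defpsi}), extend it to the other cones via $T_2,T_3,T_4$ as in \eqref{extend}, and then subtract a multiple $c_n\varphi^*_{h_n}$ to restore membership in $E_{h_n}$, proving $c_n\to 0$ — which works only for $\bar\psi$ already orthogonal to $U_0^{p-1}\partial_{\nu_1}U_0$, i.e.\ $\bar\psi\in E$. The missing direction $E^{\perp}=\mathrm{span}\{\sum_{i\le 3}\partial_{y_i}U_0\}$ must then be recovered separately; the paper's Step 2 does this by noting that this function is an exact kernel element of the limit operator, so the weak identity against it holds for free (alternatively, you would obtain the limit equation with a Lagrange multiplier $\mu\,U_0^{p-1}\partial_{\nu_1}U_0$ on the right and must kill $\mu$ by pairing with $\partial_{\nu_1}U_0$). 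Note also that you misplace the ``main obstacle'': the kernel-reduction step you flag as delicate is handled correctly in your outline, whereas the construction of admissible test functions — the part your write-up asserts without proof — is where the twelve-element group structure and the orthogonality constraint really have to be used.
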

\begin{proof}
We argue by contradiction. Suppose that there are   $\ve_n \in (0,\ve_0)$, $h_n\in S_{\ve_n}$, and $\phi_n\in E_{h_n}$ with   $\lim_{n\to+\infty}\ve_n=0$ and
\begin{equation*}\label{assume1}\|L_{\ve_n,h_n}(\phi_{n})\|=o_n(1)\|\phi_n\|.\end{equation*} Here, we use the notation $o_n(1)$ to denote  $o_n(1)\to0$ as $n\to+\infty$.
By $(A1)$, $\ve_0$ is chosen so that $\inf_y \{1+\ve V_1(|y|)\}\ge c_0$ for some $c_0>0$ for any $\ve \in(0,\ve_0)$.
We may assume that \begin{equation}\label{assume2}\|\phi_n\|=1.\end{equation}
For simplicity,  we denote $L_{\ve_n,h_n}$, $S_{\ve_n}$,  and $E_{h_n}$ by $L_n$, $S_n$, and $E_n$, respectively. Then, we have
\begin{equation}\label{small_in}<L_{n}(\phi_{n}),\ \psi>=o_n(1)\|\phi_n\|\|\psi\| \ \textrm{for any}\ \psi\in E_n.
\end{equation}
Because $\phi_n(y)=\phi_n(T_i y)$ and $\psi(y)= \psi(T_i y)$ for $1\le i\le 12$, from \eqref{small_in}, we have
\begin{equation}\begin{aligned} \label{by_sym}
&\int_{{\C}_1}\left( \nabla \phi_n\cdot\nabla\psi + V_\ve\phi_n\psi - p  \left(   {{W}_{h}}\right)^{p-1} \phi_n\psi\right)dy
\\&=\frac{1}{4}\int_{ \mathbb{R}^N}\left( \nabla \phi_n\cdot\nabla\psi + V_\ve\phi_n\psi - p  \left(   {{W}_{h}}\right)^{p-1} \phi_n\psi\right)dy\\&=o_n(1)\|\phi_n\|\|\psi\| \ \ \textrm{for any}\ \psi\in E_n.
\end{aligned}\end{equation}
By choosing $\psi=\phi_n$ and using \eqref{assume2},  we also obtain
\begin{equation}\begin{aligned} \label{nonlinear_sym}
&\int_{{\C}_1}\left( |\nabla \phi_n|^2  + V_\ve\phi_n^2 - p  \left(   {{W}_{h}}\right)^{p-1} \phi_n^2\right)dy
=o_n(1),\end{aligned}\end{equation}
and \begin{equation}\begin{aligned} \label{norm_sym}
&\int_{{\C}_1}\left( |\nabla \phi_n|^2  + \phi_n^2\right)dy =\frac{1}{4}.
\end{aligned}\end{equation}
Let \begin{equation}\label{barphi}\bar{\phi}_n(y)=\phi_n(y+h_n t_1).\end{equation}
For any fixed constant $R>0$,  if $n$ is sufficiently large, $B_{R}(h_nt_1)\subset {\C}_1$ because $(x_1,\cdots, x_N)\in B_{R}(h_nt_1)$ implies that  $0<h_n-R\le x_i \le h_n+R$   for $i=1,2,3$ by $h_n\in S_n=   \left[ \left(\frac{1}{2\sqrt{2}}-\beta_0\right)\ln\frac{1}{\varepsilon_n},  \left(\frac{1}{2\sqrt{2}}+\beta_0\right)\ln\frac{1}{\varepsilon_n}\right]$.
Thus, in view of \eqref{norm_sym}, we have
\begin{equation*}\label{bdd1}\int_{B_R(0)}\left( |\nabla \bar{\phi}_n|^2  + \bar{\phi}_n^2\right)dy\le \int_{\{y | y+h_nt_1\in{\C}_1\} } \left( |\nabla \bar{\phi}_n|^2  + \bar{\phi}_n^2\right)dy= \frac{1}{4}.\end{equation*}
Then there exists $\bar{\phi}\in H^1(\mathbb{R}^N)$ such that as $n\to+\infty$,
\begin{equation}\label{con_phi}\bar{\phi}_n\rightharpoonup\bar{\phi}\  \textrm{weakly in}\  H^1_{\textrm{loc}}(\mathbb{R}^N),\  \ \textrm{and}\  \  \bar{\phi}_n\rightarrow\bar{\phi}\  \textrm{strongly in}\  L^2_{\textrm{loc}}(\mathbb{R}^N).\end{equation}

Define
\begin{equation*}\label{def_e} E=\left\{\bar\psi\in H^1(\mathbb{R}^N)\ \Big| \ \int_{\mathbb{R}^N}f(|y|) \left(y \cdot t_{1}\right)  \bar\psi( y)dy=0\right\},
\end{equation*}
where $\displaystyle f(|y|) \left(y \cdot t_{1}\right)=U_0(y)^{p-1} \sum_{i=1}^3\frac{\partial U_0(y)}{\partial y_i}$ since $y\cdot t_1 = \displaystyle\sum_{i=1}^3 y_i$. Notably, $\displaystyle\sum_{i=1}^3\frac{\partial U_0(y)}{\partial y_i}$ is in the kernel of the following linearized problem
\begin{equation}\label{ker}\Delta\left(\sum_{i=1}^3 \frac{\partial U_{0}(y)}{\partial y_i}\right)  -\left(\sum_{i=1}^3 \frac{\partial U_{0}}{\partial y_i}\right)+pU_0(y)^{p-1}\left(\sum_{i=1}^3\frac{\partial U_0(y)}{\partial y_i}\right)=0\end{equation}
and $E=\textsf{span}<\sum_{i=1}^3\frac{\partial U_0(y)}{\partial y_i}>^{\bot}$ in $H^1(\mathbb{R}^N)$. 

First, we claim that $\bar\phi \in E$. In view of  $\phi_n\in E_n$, we have \begin{equation}\begin{aligned}\label{sum_rn}0&=\int_{\mathbb{R}^N}\sum_{i=1}^4  U_{h_n,i}^{p-1} \frac{\partial U_{h_n,i}}{\partial h_n} \,\phi_n~dy
\\&=\sum_{k=1}^4\int_{{\C}_k}\left(\sum_{i=1}^4 U_{h_n,i}^{p-1} \frac{\partial U_{h_n,i}}{\partial h_n} \,\phi_n\right)dy
\\&=\sum_{k=1}^4\int_{{\C}_k}\left(\sum_{i=1}^4 f(|y-h_nt_i|)(y-h_nt_i)\cdot t_i  \phi_n(y)\right)dy,\end{aligned}\end{equation} where $f(\cdot)$ is defined in \eqref{deff}. Using \eqref{tki},
%
\begin{equation}\begin{aligned}\label{omega1_re} &\int_{{\C}_k}\left(\sum_{i=1}^4 f(|y-h_nt_i|)(y-h_nt_i)\cdot t_i  \phi_n(y)\right)dy
\\&=\int_{{\C}_1}\left(\sum_{i=1}^4 f(|T_k(z-h_nt_{k_i})|)T_k(z-h_nt_{k_i})\cdot T_k(t_{k_i})  \phi_n(T_k z)\right)dz
\\&=\int_{{\C}_1}\left(\sum_{i=1}^4 f(|z-h_nt_{k_i}|)(z-h_nt_{k_i})\cdot t_{k_i}  \phi_n( z)\right)dz
\\&=\int_{{\C}_1}\left(\sum_{i=1}^4 f(|z-h_nt_{i}|)(z-h_nt_{i})\cdot t_{i}  \phi_n( z)\right)dz,\end{aligned}\end{equation} here, we used $T_ka\cdot T_kb=a\cdot b$ for any $a,b\in \mathbb{R^N}$ and  $k=1,2,3,4$ in the second equality of \eqref{omega1_re}.

By \eqref{sum_rn} and \eqref{omega1_re}, we have
\begin{align*}0&=\int_{{\C}_1}\left(\sum_{i=1}^4 f(|y-h_nt_{i}|)(y-h_nt_{i})\cdot t_{i}  \phi_n( y)\right)dy
\\&=\int_{\{y| y+h_nt_1\in {\C}_1\}}f(|y|)\left(y \cdot t_{1}\right) \bar{\phi}_n( y)dy
\\&+\int_{{\C}_1}\left(\sum_{i=2}^4 f(|y-h_nt_{i}|)(y-h_nt_{i})\cdot t_{i}  \phi_n( y)\right)dy.\end{align*}
Notably, if $i=2,3,4$, $\lim_{n\to+\infty}\textrm{dist}({\C}_1, h_nt_i)=+\infty$. Using the exponential decay of $U_0$ in \eqref{exp}, $\|\phi_n\|=1$, and the convergence of $\bar{\phi}_n$ in \eqref{con_phi}, we have
\begin{equation}\begin{aligned}\label{omega1_id2}
 0= \int_{\mathbb{R}^N}f(|y|) \left(y \cdot t_{1}\right)  \bar{\phi}( y)dy.\end{aligned}\end{equation}
thus, the claim follows.

Now we claim that $\bar{\phi}$ satisfies
\begin{equation}\label{eq_bar_phi} \Delta \bar{\phi}- \bar{\phi} +p U_0^{p-1}\bar{\phi}=0\quad\textrm{in}\  \ \mathbb{R}^N.\end{equation}
We will prove \eqref{eq_bar_phi} with the following two steps in order to show that
$$\int_{\mathbb{R}^N} \left(\nabla \bar{\phi}\nabla \bar\psi+ \bar{\phi} \bar\psi-p U_0^{p-1}\bar{\phi}\bar\psi \right)dy=0\ \textrm{ for any }\ \bar\psi\in H^1(\mathbb{R}^N)= E \bigoplus E^{\bot}.$$

\textbf{Step 1.} For any fixed constant $R>0$,  let $\bar\psi\in C_0^\infty(B_R(0)) \cap E$ and be even in $y_n$, $3<n\le N$.  We set \begin{equation}\label{defpsi}\psi_n(y)= \bar\psi\left(y-h_nt_1\right)+\bar\psi\left(T_5\left(y-h_nt_1\right)\right)+\bar\psi\left(T_9\left(y-h_nt_1\right)\right).\end{equation} If $n$ is sufficiently large, then $\psi_n\in C_0^\infty\left(B_R\left(h_n t_1\right)\right) \subset C_0^\infty\left({\C}_1\right)$ and \begin{equation*}\label{on_omega1}\psi_n(y)=\psi_n(T_5 y)=\psi_n(T_9 y)\quad\textrm{for}\ \ y\in {\C}_1.\end{equation*} We extend $\psi_n$ outside ${\C}_1$ to define an element in $E_n$ as follows:
\begin{equation}\label{extend}\psi_n(y):= \psi_n(T_i y)\quad\textrm{on}\ \ {\C}_i, \ i=2,3,4.\end{equation}
We check that $\psi_n \in H_s$ in Appendix \ref{appendix_claim}. We consider
$$c_n := \left<\psi_n, \frac{{\varphi}^*_{h_n}}{\|{\varphi}^*_{h_n}\|_{L^2(\mathbb{R}^n)}} \right> \frac{1}{\|{\varphi}^*_{h_n}\|_{L^2(\mathbb{R}^n)}}, \quad \hat\psi_n = \psi_n - c_n{\varphi}^*_{h_n}.$$
We claim that  $\hat{\psi}_n \in E_n$ for each $n$ and
\begin{equation} \label{appendix1}
\displaystyle\lim_{n\to+\infty}c_n=0.
\end{equation}
Because  $\bar\psi\in C_0^\infty(B_R(0)) \cap E$
$$0=\int_{\mathbb{R}^N}f(|y|) \left(y \cdot t_{1}\right)  \bar\psi( y)dy=\int_{B_R(0)}f(|y|) \left(y \cdot t_{1}\right)  \bar\psi( y)dy,$$ and thus
\begin{equation*}\begin{aligned}\label{psi_sum}
0&= \int_{B_R(h_n t_1)}f(|y-h_nt_1|) \left((y-h_nt_1) \cdot t_{1}\right)  \bar\psi( y-h_nt_1)dy
\\&+\int_{B_R(h_nt_1)}f(|T_5\left(y-h_nt_1\right)|) \left(T_5\left(y-h_nt_1\right) \cdot T_5 (t_{1})\right)  \bar\psi( T_5\left(y-h_nt_1\right))dy
\\&+\int_{B_R(h_nt_1)}f(|T_9\left(y-h_nt_1\right)|) \left(T_9\left(y-h_nt_1\right) \cdot T_9 (t_{1})\right)  \bar\psi( T_9\left(y-h_nt_1\right))dy
\\&= \int_{B_R(h_nt_1)}f(|y-h_nt_1|) \left((y-h_nt_1) \cdot t_{1}\right)
\\&\times\left\{\bar\psi( y-h_nt_1)+  \bar\psi\left( T_5\left(y-h_nt_1\right)\right)+  \bar\psi\left( T_9\left(y-h_nt_1\right)\right)\right\}dy
\\&= \int_{B_R(h_nt_1)}f(|y-h_nt_1|) \left((y-h_nt_1) \cdot t_{1}\right)
\psi_n(y)dy,
 \end{aligned}\end{equation*}here, we used $t_1=T_5(t_1)=T_9(t_1)$ in the first equality,  $T_ka\cdot T_kb=a\cdot b$ for any $a,b\in \mathbb{R^N}$ and  $k=5,9$ in the second equality, and  $\psi_n(y)= \psi\left(y-h_nt_1\right)+\psi\left(T_5\left(y-h_nt_1\right)\right)+\psi\left(T_9\left(y-h_nt_1\right)\right)$ on $B_R(h_n t_1)\subset{\C}_1$ by  \eqref{defpsi} in the third equality.  Because $\psi_n\equiv0$ on ${\C}_1\setminus B_R(h_nt_1)$,  we have \begin{equation*}\label{psi_sum2}
0=  \int_{{\C}_1}f(|y-h_nt_1|) \left((y-h_nt_1) \cdot t_{1}\right)
\psi_n(y)dy \quad \text{and}
 \end{equation*}
\begin{equation*}\label{psi_sum3_a}c_n=\frac{\int_{{\C}_1}\left(\sum_{i=2}^4 f(|y-h_nt_{i}|)(y-h_nt_{i})\cdot t_{i}  \psi_n( y)\right)dy}{\int_{{\C}_1}\left(\sum_{i=1}^4 f(|y-h_nt_{i}|)(y-h_nt_{i})\cdot t_{i}  \right)^2dy}.\end{equation*}Moreover, $\lim_{n\to+\infty}\textrm{dist}({\C}_1, h_nt_i)=+\infty$ for $i=2,3,4$, and the exponential decay of $U_0$ in \eqref{exp} imply  that the denominator
  \begin{equation*}\begin{aligned}&\int_{{\C}_1}\left(\sum_{i=1}^4 f(|y-h_nt_{i}|)(y-h_nt_{i})\cdot t_{i}  \right)^2dy\\&\ge \int_{B_1(0)}\left(f(|z|)(z\cdot t_{1} ) \right)^2dz+o_n(1)\ge c_0>0\ \textrm{for some constant}\ c_0>0,\end{aligned}\end{equation*} and $\lim_{n\to+\infty}c_n=0$.
Using similar arguments in \eqref{sum_rn} and \eqref{omega1_re}, we obtain
 \begin{equation*}\begin{aligned}&\int_{\mathbb{R}^N}\sum_{i=1}^4  U_{h,i}^{p-1} \frac{\partial U_{h,i}}{\partial h} \,\left(\psi_n-c_n\sum_{i=1}^4  U_{h_n,i}^{p-1} \frac{\partial U_{h_n,i}}{\partial h_n}\right)~dy
\\&=4\int_{{\C}_1}\sum_{i=1}^4  U_{h,i}^{p-1} \frac{\partial U_{h,i}}{\partial h} \, \left( \psi_n( y)-c_n\sum_{i=1}^4 f(|y-ht_{i}|)(y-ht_{i})\cdot t_{i} \right)dy=0\end{aligned}\end{equation*}
and the claim follows.

Now, \eqref{by_sym} and \eqref{appendix1} imply that
\begin{align*}
o_n(1)&=\int_{{\C}_1}\left( \nabla \phi_n\cdot\nabla\hat{\psi}_n + V_\ve\phi_n\hat{\psi}_n - p  \left(   {{W}_{h}}\right)^{p-1} \phi_n\hat{\psi}_n\right)dy
\\&=\int_{{\C}_1}\left( \nabla \phi_n\cdot\nabla{\psi}_n + V_\ve\phi_n{\psi}_n - p  \left(   {{W}_{h}}\right)^{p-1} \phi_n{\psi}_n\right)dy
+o_n(1)\\&=\int_{B_R(h_nt_1)}\left( \nabla \phi_n\cdot\nabla{\psi}_n + V_\ve\phi_n{\psi}_n - p  \left(   {{W}_{h}}\right)^{p-1} \phi_n{\psi}_n\right)dy
+o_n(1),\end{align*} here, we used $\psi_n\in C_0^\infty\left(B_R\left(h_n t_1\right)\right)$.
Moreover, the exponential decay of $U_0$ in \eqref{exp}, definiton of $\bar{\phi}_n$ in \eqref{barphi}, and property of $V_\ve$ in (A1) and (A2), we obtain
\begin{align*}
o_n(1)& =\int_{B_R(h_nt_1)}\left( \nabla \phi_n\cdot\nabla{\psi}_n + V_\ve\phi_n{\psi}_n - p  \left(   {{W}_{h}}\right)^{p-1} \phi_n{\psi}_n\right)dy
\\&=\int_{B_R(h_nt_1)}\Big( \nabla\bar{\phi}_n(y-h_nt_1)\cdot\nabla{\psi}_n (y)+ V_\ve\bar{\phi}_n(y-h_nt_1){\psi}_n
\\& - p  \left(U_0(y-h_nt_1)\right)^{p-1} \bar{\phi}_n(y-h_nt_1){\psi}_n\Big)dy+o_n(1)
\\&=\sum_{i=1,5,9}\int_{B_R(0)}\Big( \nabla\bar{\phi}_n(y)\cdot \nabla {\bar\psi}(T_i y)+ \bar{\phi}_n(y){\bar\psi}(T_i y)
\\& - p  \left(U_0(y)\right)^{p-1} \bar{\phi}_n(y){\bar\psi}(T_i y)\Big)dy+o_n(1).\end{align*} Since $\phi_n\in E_n$ and $T_i(h_nt_1)=h_nt_1$ for $i=1,5,9$, we have  $\bar{\phi}_n(y)=\bar{\phi}_n(T_i y)$ for $i=1,5,9$.  Using  \eqref{con_phi} and  $\psi\in C_0^\infty\left(B_R\left(0\right)\right)$, we obtain
\begin{equation}\begin{aligned} \label{by_sym4}
 0&=\int_{\mathbb{R}^N}\left( \nabla\bar{\phi}(y)\cdot \nabla {\bar\psi}( y)+ \bar{\phi}(y){\bar\psi}( y)
 - p  \left(U_0(y)\right)^{p-1} \bar{\phi}(y){\bar\psi}(y)\right)dy.\end{aligned}\end{equation}
However, because $\bar{\phi}(y_1,\cdots,y_N)$ is even in $y_n$, $3<n\le N$, \eqref{by_sym4}  holds for any function $\psi\in C_0^\infty(B_R(0))$, which is odd in $y_n$, $3<n\le N$. Therefore,  \eqref{by_sym4} holds for any $\bar\psi\in E$ because of  the density of $C_0^\infty(B_R(0))$ in $H^1(\mathbb{R}^N)$.

\textbf{Step 2.} Let $\bar\psi=\sum_{i=1}^3 \frac{\partial U_0(y)}{\partial y_i}$. In view of  \eqref{ker}, we have
\begin{equation*}\Delta\bar\psi  -\bar\psi +p(U_0(y))^{p-1}\bar\psi=0.\end{equation*}
Thus, \eqref{by_sym4} also holds for $\bar\psi\in E^{\bot}$.

\bigskip

In view of Steps 1 and 2, we prove the claim \eqref{eq_bar_phi}, that is,
\begin{equation*} \Delta \bar{\phi}- \bar{\phi} +p U_0^{p-1}\bar{\phi}=0\quad\textrm{in}\  \ \mathbb{R}^N.\end{equation*}
Because $U_0$ is nondegenerate (see \cite{Kwong}) and $\bar{\phi}(y_1,\cdots, y_N)$ is even
in $y_n$, $3<n\le N$, there are constants $c_i$, $i=1,2,3$, such that \begin{equation*}\bar{\phi}(y)=\sum_{i=1}^3 c_i\frac{\partial U_0(y)}{\partial y_i}=\frac{U_0'(|y|)}{|y|}\sum_{i=1}^3 c_i  y_i.\end{equation*}
We claim that \begin{equation}\label{ci0}\bar{\phi}\equiv0.\end{equation}  To prove \eqref{ci0}, we first recall \eqref{omega1_id2}, i.e., $
 0= \int_{\mathbb{R}^N}f(|y|) \left(\sum_{i=1}^3y_i\right)  \bar{\phi}( y)dy$. Then we have  \begin{equation}\label{sumc}\sum_{i=1}^3 c_i=0.\end{equation}
Because $\phi_n\in H_s$,  $T_k(h_n t_1)=h_nt_1$ for $k=1,5,9$, and  $\bar{\phi}_n(y)=\phi_n(y+h_n t_1)\rightharpoonup\bar{\phi}(y)$  {weakly in} $H^1_{\textrm{loc}}(\mathbb{R}^N)$ as $n\to+\infty$, we have $\bar{\phi}(y)=\bar{\phi}(T_k y)$ for $k=1,5,9$, implying $c_1=c_2=c_3$.  Together with \eqref{sumc}, we conclude that $c_1=c_2=c_3=0$; thus,  the claim \eqref{ci0} holds.

Consequently,  for any fixed constant $R>0$,  we obtain
\begin{equation}\label{smallp}
\int_{B_R(h_nt_1)}\phi_n^2 dy=o_n(1).\end{equation}

By \eqref{norm_sym}, we have
\begin{align*}
\int_{{\C}_1}\left( |\nabla \phi_n|^2  + V_\ve\phi_n^2  \right)dy
&\ge \min\{1, c_0\}\int_{{\C}_1}\left( |\nabla \phi_n|^2  + \phi_n^2  \right)dy\\&= \frac{\min\{1,c_0\}}{4}>0.
\end{align*}
In view of \eqref{nonlinear_sym}, the exponential decay of $U_0$ in \eqref{exp}, $\lim_{n\to+\infty}\textrm{dist}({\C}_1, h_nt_i)=+\infty$ for $i=2,3,4$,  and \eqref{smallp}, we  obtain
\begin{align*}
o_n(1)&=\int_{{\C}_1}\left( |\nabla \phi_n|^2  + V_\ve\phi_n^2 - p  \left(   {{W}_{h}}\right)^{p-1} \phi_n^2\right)dy
\\&\ge  \frac{\min\{1,c_0\}}{4}-\int_{{\C}_1}p  \left(U_0(y-h_nt_1)\right)^{p-1} \phi_n^2dy+o_n(1)
\\&= \frac{\min\{1,c_0\}}{4}+o_n(1),\end{align*}
which is a contradition.
\end{proof}

We recall from \eqref{defg} that
\begin{equation*}\begin{aligned}
      {g}_{\ve,h}(\phi)=&  ({V_{\varepsilon}}-1)   {{W}_{h}}-  \left\{ \left|   {{W}_{h}}+\phi\right|^{p-1}(W_h+\phi)  -\sum_{i=1}^4 \left(U_{h,i}\right)^p - p \left(   {{W}_{h}}\right)^{p-1}\phi  \right\}.\end{aligned}
\end{equation*}
In view of  Sobolev embedding,  we have $ {g}_{\ve,h}(\phi)$ {which also defines the bounded linear functional ${G}_{\ve,h}(\phi)$ on ${E}_h$ such that}
\begin{align*}
  {G}_{\ve,h}(\phi)[\psi]:=\int_{ \mathbb{R}^N}  {g}_{\ve,h}(\phi)\psi \quad \text{for $\psi \in {E}_h$}.
\end{align*}
Applying the Riesz representation theorem, there exists  $ \Gamma_{\varepsilon,h}(\phi) \in {E}_h$ with
$$ \left< \Gamma_{\varepsilon,h}(\phi),\psi\right>  =  {G}_{\ve,h}(\phi)[\psi] \quad \text{for $\psi \in {E}_h$}.$$
Combined with the  inverse   $L_{\varepsilon, h}^{-1}$ on  ${E}_h$, we can define the operator $F_{\ve,h}:  E_h \rightarrow E_h$ such that  \begin{equation*}\label{defF}F_{\ve,h}(\phi):=L_{\varepsilon, h}^{-1}\Big( \Gamma_{\varepsilon,h}(\phi)\Big)\quad\textrm{for}\ \ \phi\in E_h.\end{equation*} If ${\phi}$ is any fixed point of $F_{\ve,h}$, then the following holds
\begin{align*}
 &{L_{\varepsilon, h}(\phi)} =  \Gamma_{\varepsilon,h}(\phi) \end{align*}
 if and only if
 \begin{align*}
 \int_{\mathbb{R}^N} \nabla \phi \cdot \nabla \psi + {V_{\varepsilon}}(y) \phi\psi - p \left(   {{W}_{h}}\right)^{p-1}\phi\psi = \int_{\mathbb{R}^N} - {g}_{\ve,h}(\phi)\psi \quad \mbox{for all}\ \psi \in {E}_h.
  \end{align*}

To obtain a fixed point for $F_{\ve,h}$ and estimate the energy for \eqref{maineq}, we  establish estimations for the sum $\displaystyle \sum_{i=1}^4  U_{h,i}$  on ${\C}_1$. The proof is motivated by Lemma A.1 in \cite{wei_yan_2014}.
\begin{lemma}\label{ksum}  \cite[Lemma A.1]{wei_yan_2014} Fix any $\eta \in (0,2]$.  Then for any $h\in S_\ve$ and $y \in{\C}_1$, we have
\begin{equation*}\begin{cases}&\displaystyle \sum_{i=2}^4 U_{h,i}(y) \le    {3M e^{-\sqrt{2}\eta h}   e^{-(1-\eta)|y-{ht_1}|}\min\{|y-ht_1|^{-\left(\frac{N-1}{2}\right)},1\},} \\&  \displaystyle    {{W}_{h}}(y)=\sum_{i=1}^4 U_{h,i}(y) \le 4Me^{-|y-{ht_1}|}\min\{|y-ht_1|^{-\left(\frac{N-1}{2}\right)},1\}.\end{cases}\end{equation*}
 \end{lemma}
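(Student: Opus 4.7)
My plan is to reduce the lemma to two geometric inequalities for $|y-ht_i|$ on $\C_1$ and then feed these into the pointwise decay bound \eqref{exp}.

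First, I will show that $\C_1$ is exactly the Voronoi cell of $ht_1$ among $\{ht_1,\dots,ht_4\}$. A direct computation gives $|y-ht_1|^2 - |y-ht_i|^2 = -2h\,y\cdot(t_1-t_i)$, and for $i=2,3,4$ the right-hand side reduces respectively to $-4h(y_2+y_3)$, $-4h(y_1+y_3)$, $-4h(y_1+y_2)$, so the three defining inequalities of $\C_1$ in \eqref{defo} translate exactly to $|y-ht_1|\le|y-ht_i|$ for $i=2,3,4$. Hence for every $y\in\C_1$ and every $i\in\{2,3,4\}$,
\[
|y-ht_i|\ge |y-ht_1|.
\]
Next, since $|t_1-t_i|=2\sqrt{2}$, the perpendicular bisector $P_{1i}$ sits at distance $\sqrt{2}h$ from $ht_i$, so every point of the closed half-space containing $ht_1$, and in particular every $y\in\C_1$, satisfies $|y-ht_i|\ge\sqrt{2}h$.

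The key claim I will then establish is
\[
|y-ht_i|\ge \sqrt{2}\,\eta h + (1-\eta)|y-ht_1| \qquad \text{for every } \eta\in(0,2].
\]
For $\eta\in(0,1]$ this is simply the convex combination of the two bounds above. For $\eta\in(1,2]$ I split on the size of $|y-ht_1|$: if $|y-ht_1|\le\sqrt{2}h$, the triangle inequality $|y-ht_i|\ge 2\sqrt{2}h-|y-ht_1|$ reduces the claim to $(2-\eta)\sqrt{2}h\ge(2-\eta)|y-ht_1|$, which is the hypothesis; if instead $|y-ht_1|>\sqrt{2}h$, then the right-hand side $\sqrt{2}\eta h-(\eta-1)|y-ht_1|$ is strictly less than $\sqrt{2}h$, which is in turn bounded above by $|y-ht_i|$ by the second observation.

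Finally, I combine these geometric bounds with \eqref{exp}. A short case check on $r\le 1$ versus $r\ge 1$ shows that $r\mapsto e^{-r}\min\{r^{-(N-1)/2},1\}$ is monotonically decreasing, so $|y-ht_i|\ge|y-ht_1|$ yields $\min\{|y-ht_i|^{-(N-1)/2},1\}\le \min\{|y-ht_1|^{-(N-1)/2},1\}$, while the refined exponential bound above gives $e^{-|y-ht_i|}\le e^{-\sqrt{2}\eta h}\,e^{-(1-\eta)|y-ht_1|}$. Combining, for each $i\in\{2,3,4\}$,
\[
U_{h,i}(y)\le M e^{-\sqrt{2}\eta h}\,e^{-(1-\eta)|y-ht_1|}\min\{|y-ht_1|^{-(N-1)/2},1\},
\]
and summing in $i$ produces the first assertion with factor $3M$. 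The second assertion follows by the same monotonicity applied to all four indices (for $i=1$ it is just \eqref{exp} itself), yielding the factor $4M$. I expect no substantial obstacle; the whole lemma is essentially a careful packaging of the Voronoi structure of $\C_1$ together with the monotone decay of $U_0$.
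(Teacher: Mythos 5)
Your proof is correct and follows essentially the same route as the paper: both reduce the lemma to the decay bound \eqref{exp} together with the Voronoi property $|y-ht_i|\ge|y-ht_1|$ on $\C_1$ and the key estimate $|y-ht_i|\ge\sqrt{2}\eta h+(1-\eta)|y-ht_1|$, which the paper obtains in one stroke by writing $e^{\frac{\eta}{2}|ht_i-ht_1|}e^{-|y-ht_i|}\le e^{\frac{\eta}{2}|y-ht_1|}e^{(\frac{\eta}{2}-1)|y-ht_i|}$ via the triangle inequality and then using $\frac{\eta}{2}-1\le0$. Your case split for $\eta\in(1,2]$ is valid but avoidable; otherwise the arguments coincide, and your explicit verification that $\C_1$ is the Voronoi cell of $ht_1$ is a detail the paper leaves implicit.
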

\begin{proof}
From the exponential decay of $U_0$ in \eqref{exp}, for each $i$, we have
\begin{align*}
& U_{h,i}(y) =U_0(|y-ht_i|)\le M    {\min\{|y-ht_i|^{-\left(\frac{N-1}{2}\right)},1\}}e^{-|y- ht_i|}
\\&= M   {\min\{|y-ht_i|^{-\left(\frac{N-1}{2}\right)},1\}}e^{ -\frac{\eta}{2}| ht_i- ht_1| }e^{ \frac{\eta}{2}| ht_i- ht_1| }e^{ -|y- ht_i| }\\&
 \le M   {\min\{|y-ht_1|^{-\left(\frac{N-1}{2}\right)},1\}}e^{ -\frac{\eta}{2}| ht_i- ht_1| }e^{ \frac{\eta}{2}|y- ht_1| }e^{ ( \frac{\eta}{2}-1)|y- ht_i| }
\\&\le M   {\min\{|y-ht_1|^{-\left(\frac{N-1}{2}\right)},1\}}e^{ -\frac{\eta}{2}| ht_i- ht_1| }e^{ ( -1 + \eta)|y- ht_1| },
\end{align*}
where we employed that $ \frac{\eta}{2} -1 \le 0$ and that $|y- ht_i|\ge |y- ht_1|$ for $y\in {\C}_1$. Therefore, we obtain
\begin{align*} \sum_{i= 2}^4 U_{h,i}(y) &\le M   {\min\{|y-ht_1|^{-\left(\frac{N-1}{2}\right)},1\}}e^{(-1+\eta)|y- ht_1|}\sum_{i=2}^4 e^{-\frac{\eta}{2}| ht_i- ht_1|}\\&= 3 M   {\min\{|y-ht_1|^{-\left(\frac{N-1}{2}\right)},1\}}e^{(-1+\eta)|y- ht_1|}  e^{-\sqrt{2}\eta h }.\end{align*}
We also see that the exponential decay of $U_0$ in \eqref{exp} and $|y- ht_i|\ge |y- ht_1|$ for $y\in {\C}_1$ imply
\begin{align*}\sum_{i= 1}^4 U_{h,i}(y) &\le M\sum_{i= 1}^4e^{-|y- ht_i|} \min\{ |y-ht_i|^{-\left(\frac{N-1}{2}\right)},1\} \\&\le  4Me^{-|y- ht_1|} \min\{ |y-ht_1|^{-\left(\frac{N-1}{2}\right)},1\}.\end{align*}Now we complete the proof of Lemma \ref{ksum}.
 \end{proof}
Let $\gamma:=\frac{1}{2}-\sqrt{2}\beta_0>0$,
and   \begin{equation*}\label{Bgamma}B_{\ve,h}:=\{ \phi \in  E_h \ | \   \|{\phi}\| \le \ve^{\gamma}\}.\end{equation*}
Now we prove the existence of a fixed point of   $F_{\ve,h}$ on $B_{\ve,h}$.
\begin{proposition} \label{Fixed} There exists $\ve_0$ such that for $\ve\in(0, \ve_0)$ and $h \in S_\ve$, the map $F_{\ve,h}$ has a fixed point $\phi_h \in B_{\ve,h}$.
 \end{proposition}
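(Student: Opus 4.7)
My plan is to apply Banach's fixed point theorem on the closed ball $B_{\ve,h}$ inside the Hilbert space $E_h$. Writing $F_{\ve,h} = L_{\ve,h}^{-1} \circ \Gamma_{\ve,h}$ and invoking Lemma \ref{inu0} to bound $\|L_{\ve,h}^{-1}\|$ by $1/\rho_0$, both the self-map property and the contraction property will reduce to controlling
\[
\|\Gamma_{\ve,h}(\phi)\| \;=\; \sup_{\psi \in E_h,\, \|\psi\|=1}\,\left|\int_{\mathbb{R}^N} g_{\ve,h}(\phi)\,\psi\,dy\right|.
\]
I will decompose $g_{\ve,h}(\phi)$ into three pieces: (i) the linear potential term $\ve V_1 W_h$, (ii) the interaction error $\mathcal{I} := \sum_{i=1}^4 U_{h,i}^p - W_h^p$, and (iii) the nonlinear remainder $\mathcal{N}(\phi) := |W_h+\phi|^{p-1}(W_h+\phi) - W_h^p - p W_h^{p-1}\phi$. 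Since both $\phi$ and $\psi$ lie in $H_s$, the $\{T_1,T_2,T_3,T_4\}$-symmetry lets me replace every integral by four times an integral over $\C_1$, where $U_{h,1}$ is the dominant bump.

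For (i), boundedness of $V_1$ and Hölder's inequality immediately yield the dual bound $C\ve \|W_h\|_{L^2} \|\psi\|$, which is $O(\ve)$ and is dominated by $\ve^\gamma$ since $\gamma < 1/2$. For (iii), Taylor expanding $t \mapsto |t|^{p-1}t$ around $W_h$ gives a pointwise bound of the form $|\mathcal{N}(\phi)| \le C\bigl(W_h^{(p-2)_+}|\phi|^2 + |\phi|^p\bigr)$, and a pass through Sobolev embedding converts this into the dual estimate $\|\mathcal{N}(\phi)\|_{E_h^*} \le C(\|\phi\|^2 + \|\phi\|^p)$ together with the Lipschitz bound $\|\mathcal{N}(\phi_1) - \mathcal{N}(\phi_2)\|_{E_h^*} \le C(\|\phi_1\| + \|\phi_2\|)^{\min(1,p-1)} \|\phi_1 - \phi_2\|$.

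The delicate piece is (ii). On $\C_1$ set $R := \sum_{i=2}^4 U_{h,i}$, so that $|\mathcal{I}| \le C\bigl(U_{h,1}^{p-1} R + R^p\bigr)$. Applying Lemma \ref{ksum} with $\eta$ pushed to the endpoint $1$ yields $R(y) \le C e^{-\sqrt{2}\,h}\min\{|y-ht_1|^{-(N-1)/2},1\}$, which retains only polynomial decay in $|y-ht_1|$. The exponential decay of $U_{h,1}^{p-1}$ restores $L^2$-integrability: since the singularity $|y-ht_1|^{-(N-1)}$ is $L^1_{\mathrm{loc}}$ in $\mathbb{R}^N$ and $U_0^{2(p-1)}$ provides exponential decay at infinity, one obtains $\|U_{h,1}^{p-1} R\|_{L^2(\C_1)} \le C e^{-\sqrt{2}\,h}$, and the analogous estimate for $R^p$ by applying Lemma \ref{ksum} with $\eta$ slightly below $1$ so that exponential decay in $|y-ht_1|$ is preserved. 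Because $h \in S_\ve$ forces $e^{-\sqrt{2}\,h} \le \ve^{1/2 - \sqrt{2}\beta_0} = \ve^\gamma$, the contribution of (ii) matches the target rate $\ve^\gamma$ exactly.

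Assembling the three estimates, for $\phi \in B_{\ve,h}$ I will obtain
\[
\|F_{\ve,h}(\phi)\| \;\le\; \rho_0^{-1}\bigl(C_1 \ve + C_2 \ve^\gamma + C_3 \ve^{2\gamma} + C_4\ve^{p\gamma}\bigr) \;\le\; \ve^\gamma
\]
and a Lipschitz constant $O\bigl(\ve^{\min(1,p-1)\gamma}\bigr) < \tfrac{1}{2}$, once $\ve$ is sufficiently small, whence Banach's theorem delivers the fixed point $\phi_h \in B_{\ve,h}$. The main obstacle is piece (ii): reaching the decay $e^{-\sqrt{2}\,h} \sim \ve^\gamma$ forces the borderline $\eta = 1$ in Lemma \ref{ksum}, so the argument must exploit the cancellation between the lost exponential factor in $R$ and the exponential decay supplied by $U_{h,1}^{p-1}$, while being careful about the polynomial singularity at $y = ht_1$. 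A secondary technical point is the regime $1 < p < 2$ in (iii), where $|u|^{p-1}u$ is only Hölder rather than Lipschitz in $u$; in that range the pointwise bound on $\mathcal{N}$ and its Lipschitz variant must be set up with the correct exponents, using $|\mathcal{N}(\phi)| \le C|\phi|^p$ as the principal estimate.
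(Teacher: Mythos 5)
Your overall strategy—Banach fixed point on $B_{\ve,h}$, the inverse bound from Lemma \ref{inu0}, and the three-piece decomposition of $g_{\ve,h}(\phi)$—is exactly the paper's, and your treatment of pieces (i) and (iii) is sound (in fact your crude $O(\ve)$ bound for (i) is a legitimate simplification of the paper's sharper $\ve/h^m$, which is only needed later for the energy expansion). However, there is a genuine gap in piece (ii). Taking $\eta=1$ in Lemma \ref{ksum} for the product $U_{h,1}^{p-1}R$ only delivers $\|U_{h,1}^{p-1}R\|_{L^2(\C_1)}\le C\,e^{-\sqrt{2}\,h}$, and at the left endpoint $h=(\tfrac{1}{2\sqrt{2}}-\beta_0)\ln\tfrac{1}{\ve}$ of $S_\ve$ this is $C\ve^\gamma$, which exceeds the required threshold $\ve^\gamma$ whenever $\rho_0^{-1}C>1$; choosing $\ve$ small does not shrink the ratio. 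The paper avoids this by taking $\eta_1\in(1,\min\{p,2\})$ strictly above $1$: the resulting growth factor $e^{(\eta_1-1)|y-ht_1|}$ in the $R$-estimate is absorbed by the decay $e^{-(p-1)|y-ht_1|}$ of $U_{h,1}^{p-1}$, and one gains the strictly better rate $e^{-\sqrt{2}\eta_1 h}=o(\ve^\gamma)$, which makes the self-map inequality close without any constant-chasing. Alternatively one could enlarge the ball to radius $K\ve^\gamma$ for a large constant $K$, but that changes $B_{\ve,h}$ as defined.

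A secondary remark: the concern about "the polynomial singularity at $y=ht_1$" is a red herring. The bound in \eqref{exp} carries the factor $\min\{|y-ht_1|^{-(N-1)/2},1\}$, which is bounded by $1$ near $ht_1$; there is no singularity to worry about, so the ``$L^1_{\mathrm{loc}}$'' argument, while true, is not needed. The actual subtlety in piece (ii) is entirely about pushing $\eta$ above $1$ to beat the marginal rate $e^{-\sqrt{2}\,h}$, not about integrability.
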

\begin{proof}
\textit{Step 1.} In this step, we show that if  $\ve>0$ is sufficiently small, then    \begin{equation}\label{claimp}F_{\ve,h}(B_{\ve,h}) \subseteq B_{\ve,h}\quad\textrm{ for    any}\  \  {h \in S_{\ve}.}\end{equation}
By Lemma \ref{inu0} and \eqref{defg}, we have
 \begin{equation}\begin{aligned}\label{es}
  \|F_{\ve,h}(\phi)\| &= \left\|L_{\varepsilon, h}^{-1}\Big( \Gamma_{\varepsilon,h}{\phi}\Big)\right\| \le \frac{1}{\rho_0}\left\| \Gamma_{\varepsilon,h}{\phi}\right\| = \frac{1}{\rho_0} \sup_{\|\psi\|=1} \left| \int_{ \mathbb{R}^N}  {g}_{\ve,h}(\phi) \psi \right|
  \\&\le  {\frac{1}{\rho_0}} \sup_{\|\psi\|=1} \Bigg\{   {\int_{ \mathbb{R}^N}  }|\psi| \left|  ({V_{\varepsilon}}-1)   {{W}_{h}}\right| dy\\
  & +  {\int_{ \mathbb{R}^N} } |\psi| \left|  |W_h+\phi|^{p-1}\left(    {{W}_{h}}+\phi\right)  - \left(   {{W}_{h}}\right)^p - p \left(   {{W}_{h}}\right)^{p-1}\phi \right|dy
\\&+ {\int_{ \mathbb{R}^N} } |\psi| \left|  \left(   {{W}_{h}}\right)^p  - \sum_{i=1}^4 \left(U_{h,i}\right)^p \right|dy\Bigg\}.
 \end{aligned}\end{equation}

First, we recall  that $V_\ve(y)=1 + \ve V_1(y)$, and observe that for ${\sigma}\in\left(\sqrt{\frac{2}{3}},1\right)$,
\begin{equation}\begin{aligned} \label{1e}
 & {\int_{ \mathbb{R}^N}  }|\psi| \left|  ({V_{\varepsilon}}-1)   {{W}_{h}}\right| dy \le \sum_{i=1}^4\|\psi\|_{L^2(\mathbb{R}^N)} \left\| ({V_{\varepsilon}} -1)U_{h,i}\right\|_{L^2(\mathbb{R}^N)}
\\
 &=4 \|\psi\|_{L^2(\mathbb{R}^N)}\big\| ({V_{\varepsilon}} -1)U_{h,1}\big\|_{L^2(\mathbb{R}^N)}
 \\&=4\ve \|\psi\|_{L^2(\mathbb{R}^N)}\Big\| V_1(y+ht_1)U_{0}(y)\Big\|_{L^2(\mathbb{R}^N)}
\\
  &\le 4\ve \|\psi\|  \Big\| V_1(y+ht_1)U_{0}(y)\Big\|_{L^2(B_{{\sigma} h|t_1|}(0))}\\&\ +4\ve\|\psi\| \sup
_{y\in \mathbb{R}^N}|V_1(y)| \| U_{0}\|_{L^2(\mathbb{R}^N\setminus B_{{\sigma} h|t_1|}(0))} \\
 &\le C_1 \ve\|\psi\| \Big\{\Big\| V_1(y+ht_1)U_{0}(y)\Big\|_{L^2(B_{{\sigma} h|t_1|}(0))}+e^{-{\sigma} h |t_1|}\Big\}, \quad |t_1| = \sqrt{3},
\end{aligned}\end{equation}where $C_1>0$ is a constant, independent of $\ve$, and $h>0$.
Because ${|y+ht_1|}\ge {(1-{\sigma})h|t_1|}$ on $B_{{\sigma} h|t_1|}(0)$, the assumption $(A2)$ implies that
\begin{equation}\begin{aligned} \label{2e}
&\left\| V_1(y+ht_1)U_{0}(y)\right\|_{L^2(B_{{\sigma} h|t_1|}(0))}
\\&\le C_1 \left\| \left( \frac{1}{|y+ht_1|^{m }}\right)U_{0}(y)\right\|_{L^2(B_{{\sigma} h|t_1|}(0))}
\le \frac{C_2}{(1-{\sigma})^m}\frac{1}{   {{h}^{m }}}
 \end{aligned}\end{equation}where $C_1, C_2>0$ are constants, independent of $\ve, h>0$, and ${\sigma}\in\left(\sqrt{\frac{2}{3}},1\right)$.

Second, we estimate   $ {\int_{ \mathbb{R}^N} } |\psi| \left|  \left|   {{W}_{h}}+\phi\right|^{p-1}\left(   {{W}_{h}}+\phi\right)  -  \left(   {{W}_{h}}\right)^p - p \left(   {{W}_{h}}\right)^{p-1}\phi \right|dy$. For brevity, we introduce a function for $q\ge0$, $\textsf{pow}_q(x)= \begin{cases}
                                                          x^q, & x\ge0,\\
                                                          -(-x)^q, & x<0.
                                                         \end{cases}
$
We consider the following two cases.

\textbf{Case 1.} $2\le p$ : In this case,
\begin{equation} \label{p222}
\begin{aligned}
 &\left|\pow_p(W_h + \phi)  -  \left(   {{W}_{h}}\right)^p - p \left(   {{W}_{h}}\right)^{p-1}\phi\right| \\
 &= \left|p(p-1)\phi^2 \int_0^1 \int_0^s \pow_{p-2}(W_h + \lambda \phi) \: d\lambda ds\right|
 \end{aligned}
\end{equation}
and thus  there are constants $c_1, c_2>0$, independent of $\ve, h>0$,  satisfying
 \begin{equation}\label{p22}
 \begin{aligned}&\int_{\mathbb{R}^N}|\psi|\left| \pow_p({{W}_{h}} +\phi)  -  \left(   {{W}_{h}} \right)^p - p \left(   {{W}_{h}}\right)^{p-1}\phi  \right|
\\&\le c_1 \int_{\mathbb{R}^N}|\psi| \left(|\phi|^2+|\phi|^p\right)dy
\\&\le c_1 \left(\|\psi\|_{L^{p+1}(\mathbb{R}^N)}\|\phi\|_{L^{\frac{2(p+1)}{p}}(\mathbb{R}^N)}^2+ \|\psi\|_{L^{p+1}(\mathbb{R}^N)}\|\phi\|_{L^{p+1}(\mathbb{R}^N)}^p\right)
\\&\le  c_2\|\psi\|\|\phi\|^2,\end{aligned}\end{equation}
here, we used $2\le p$ and $2<\frac{2(p+1)}{p}\le p+1<\frac{2N}{N-2}$.

\textbf{Case 2.} $p<2$ : In this case,
\begin{align*}
  &\left|\pow_p(W_h + \phi)  -  \left(   {{W}_{h}}\right)^p - p \left(   {{W}_{h}}\right)^{p-1}\phi\right|
= \left|p\phi\int_0^1 |W_h + s\phi|^{p-1}-  \left(   {{W}_{h}}\right)^{p-1}\:ds\right| \\
&\le \left\{
\begin{aligned}
&p|\phi|^p\left| \int_{0}^{1}   \left(\frac{|   {{W}_{h}}|}{|\phi|}+s \right)^{p-1}+  \left(\frac{|   {{W}_{h}}|}{|\phi|}\right)^{p-1} ds \right|\le p(2^{p-1}+1)|\phi|^p &&\textrm{if}   \ |   {{W}_{h}}|\le|\phi|, \\
&p|\phi|  \int_{0}^{1}  \left|   {{W}_{h}}+s\phi \right|^{p-1} -  \left(   {{W}_{h}}\right)^{p-1}  ds \le |\phi|^p &&\textrm{if}   \ 0\le \phi\le    {{W}_{h}}, \\
&p|\phi|  \int_{0}^{1}\left|  \left(   {{W}_{h}}+s\phi +s|\phi|\right)^{p-1} -|   {{W}_{h}}+s\phi|^{p-1} \right|ds  \le |\phi|^p &&\textrm{if}  \ -   {{W}_{h}}\le \phi\le 0, \end{aligned}\right.
\end{align*}
%
%
where we used the inequality $(a+b)^{p-1}\le a^{p-1}+ b^{p-1}$ for any $a,b\ge 0$  using the condition $p-1\in(0,1)$.
Then   there is a constant  $c_3>0$, independent of $\ve, h>0$, satisfying  \begin{equation}\label{2pp}
 \begin{aligned}
&{\int_{ \mathbb{R}^N} } |\psi| \left|  \pow_p\left(   {{W}_{h}}+\phi\right)  - \left(   {{W}_{h}}\right)^p - p \left(   {{W}_{h}}\right)^{p-1}\phi \right|dy
 \le c_3\|\psi\|\|\phi\|^p.
\end{aligned}\end{equation}

Finally, Lemma \ref{ksum} implies that    {for  ${\eta}_{1}\in\left(1,\min\{p,2\}\right)$ and ${\eta}_{2}\in\left(\frac{1}{p},1\right)$,} there are constants $c_4, c_5, c_6 >0$, independent of $\ve, h>0$, satisfying

\begin{equation}
 \begin{aligned}\label{atleast2}&  {\int_{ \mathbb{R}^N} } |\psi| \big|  \left(   {{W}_{h}}\right)^p  - \sum_{i=1}^4 \left(U_{h,i}\right)^p \big|dy
  \\&\le c_4 \|\psi\|_{L^2(\mathbb{R}^N)}\Big\|  (U_{h,1}+ \sum_{i=2}^4 U_{h,i} )^p - U_{h,1}^p- \sum_{i=2}^4 \left(U_{h,i}\right)^p  \Big\|_{L^2({\C}_1)}
\\&\le c_4 \|\psi\| \Big\{\Big \|p (U_{h,1}+ \lambda \sum_{i=2}^4 U_{h,i})^{p-1}  \sum_{i=2}^4 U_{h,i} \Big\|_{L^2({\C}_1)}+\Big\|\sum_{i=2}^4 \left(U_{h,i}\right)^p\Big\|_{L^2({\C}_1)}\Big\}
\\&
\le c_5 \|\psi\| \Big(\Big\| e^{-(p-1)|y-ht_1|-(1-{\eta}_{1})|y-ht_1|} e^{-\sqrt{2}{\eta}_{1}h} \Big\|_{L^2({\C}_1)}
\\&\ +\Big\|e^{-(1-{\eta}_{2})p|y-ht_1|}e^{-\sqrt{2}{\eta}_{2}ph}\Big\|_{L^2({\C}_1)}\Big) \\&
\le c_6  \|\psi\| \left( e^{-\sqrt{2}{\eta}_{1}h} +e^{-\sqrt{2}   {{\eta}_{2}}p h}\right), \end{aligned}\end{equation}where $\lambda\in(0,1)$ in the second inequality appears by Taylor's theorem.

In view of \eqref{es}-\eqref{atleast2}, we have that     {if $\phi\in B_{\ve,h}$, for any ${\sigma}\in\left(\sqrt{\frac{2}{3}},1\right)$,  ${\eta}_{1}\in\left(1,\min\{p,2\}\right)$, and $\eta_2\in\left(\frac{1}{p},1\right)$}
\begin{equation} \begin{aligned} \label{p2}
&\|F_{\ve,h}(\phi) \|\\ &\le C_{{\sigma}}  \left(  e^{-{\sigma} {\sqrt{3} h}}+\frac{\ve}{     {{h}^m} }+\|\phi\|^{\min\{2,p\}} +e^{-\sqrt{2} {\eta}_{1}   h}    +e^{-\sqrt{2}   {{\eta}_{2} p} h} \right)
\\&<C_{{\sigma}}  \left(  e^{-{\sigma} {\sqrt{3} h}}  +e^{-\sqrt{2} {\eta}_{1}   h}    +e^{-\sqrt{2}   {{\eta}_{2} p} h} \right)+\frac{\ve^{\gamma}}{2}
\\&<\frac{e^{-\sqrt{2}    h}}{2}+\frac{\ve^{\gamma}}{2}\le\ve^{\gamma}, \end{aligned}
\end{equation}
   {where $C_{{\sigma}}>0$ is a constant, independent of $\ve, h>0$.
Therefore, we complete the proof of the claim \eqref{claimp}.}

\medskip

\textit{Step 2.} We claim that  $F_{\ve,h}$ is a contraction in the ball $B_{\ve,h}$.
By \eqref{defg} and  similar  estimations in \eqref{p222}-\eqref{2pp}, we have
\begin{align*}
& \left|{g}_{\ve,h}(\phi_1 ) -  {g}_{\ve,h}(\phi_2)\right|\\
&= \left| \pow_p\left(   {{W}_{h}}+\phi_2\right)  -\pow_p\left(   {{W}_{h}}+\phi_1\right) + p \left(   {{W}_{h}}\right)^{p-1}\phi_1  - p \left(   {{W}_{h}}\right)^{p-1}\phi_2\right|\\
&= \left|p(\phi_2-\phi_1)\int_{0}^1\left|W_h+s\phi_2+(1-s)\phi_1\right|^{p-1}-(W_h)^{p-1} \:ds \right|\\
&\le c_7\left\{
\begin{aligned}
&\left(\sum_{i=1}^2(|\phi_i|+|\phi_i|^{p-1})\right)|\phi_1-\phi_2|\ &&\textrm{if} \ 2\le p,\\
&\left(\sum_{i=1}^2 |\phi_i|^{p-1} \right)|\phi_1-\phi_2| \ &&\textrm{if} \ p<2,
 \end{aligned}\right.\\
\end{align*}
where $c_7>0$ is a constant, independent of $\ve, h>0$.
   {By  Lemma \ref{inu0} and similar  estimations in \eqref{p222}-\eqref{2pp}, we obtain that  if $\phi_1, \phi_2\in B_{\ve,h}$ and  $\ve>0$ is sufficiently small, then }
\begin{align*}
  \| F_{\ve,h}(\phi_1 ) - F_{\ve,h}(\phi_2)\|
 &\le   {\frac{1}{\rho_0}} \sup_{\|\psi\|=1} \Bigg\{   {\int_{ \mathbb{R}^N}  }|\psi| \left| {g}_{\ve,h}(\phi_1 ) -  {g}_{\ve,h}(\phi_2)\right|dy\Big\}
\\&    {\le C(\sum_{i=1}^2\|\phi_i\|^{\min\{1,p-1\}}) \|\phi_1-\phi_2\|} \le  \frac{1}{2} \| \phi_1 - \phi_2 \|,
\end{align*}where $C>0$ is a constant, independent of $\ve, h>0$.

\medskip

By the above arguments in Steps 1 and 2, we complete the proof of Proposition  \ref{Fixed}.
\end{proof}

We define the energy functional for \eqref{maineq} such that
\begin{equation}
\begin{aligned}\label{energy}
u \mapsto \int_{\mathbb{R}^{N}} \frac{1}{2}\left( |\nabla u|^{2}+{V_{\varepsilon}}(y) u^{2}\right)  -\frac{1}{p+1}  | u|^{p+1}  d y,
\end{aligned}
\end{equation}
and the one restricted in $H_s$ is denoted by $I_\ve(u)$. Notably, $I_{\ve} \in C^{2}\left(H_{s} \right)$. Let $u^*_{\ve}$ be a critical point of $I_{\ve}$ in $H_s$. We first claim that $u^*_\ve$ solves \eqref{maineq}, i.e.,
for any function $\psi\in H^1(\mathbb{R}^{N})$, we have
\begin{equation*}
\begin{aligned}  \int_{ \mathbb{R}^N}\left( \nabla u^*_\ve\cdot\nabla \psi + V_\ve u^*_\ve\psi - |u^*_\ve|^{p-1}u^*_\ve\psi\right)dy=0.
\end{aligned}
\end{equation*}
Moser iteration (see example, \cite{Byeon}) and $W^{2,2}$ estimation (see \cite[Theorem 8.8]{GT}) yield that $u^*_\ve$ is smooth, and thus the critical point $u^*_\ve$ is a smooth solution of \eqref{maineq}. To this ends, we observe that for any function $\psi_o\in C^\infty_0(\mathbb{R}^{N})$, which is odd in $y_n, 3<n\leq N$, we obtain
\begin{equation*}
\begin{aligned}  \int_{ \mathbb{R}^N}\left( \nabla u^*_\ve\cdot\nabla\psi_o + V_\ve u^*_\ve\psi_o - |u^*_\ve|^{p-1}u^*_\ve\psi_o\right)dy=0.
\end{aligned}
\end{equation*}
For any function $\psi_e\in C^\infty_0(\mathbb{R}^{N})$, which is even in $y_n, 3<n\leq N$, consider the symmetrization $\hat{\psi}(y)=\sum^{12}_{i=1}\psi_e(T_i y)$ so that
\begin{equation*}
\begin{aligned}  \int_{ \mathbb{R}^N}\left( \nabla u^*_\ve\cdot\nabla\hat{\psi} + V_\ve u^*_\ve\hat{\psi} - |u^*_\ve|^{p-1}u^*_\ve\hat{\psi}\right)dy=0.
\end{aligned}
\end{equation*}
From symmetry of $u^*_\ve$ and change of variable $T_iy = z$ by each isometry $T_i$
\begin{equation*}
\begin{aligned}  &0
=\sum^{12}_{i=1}\int_{ \mathbb{R}^N} \nabla u^*_\ve(y)\cdot\nabla\psi_e(T_i y) + V_\ve(|y|) u^*_\ve(y)\psi_e(T_i y) - |u^*_\ve(y)|^{p-1}u^*_\ve(y)\psi_e(T_i y)dy\\
&=12\int_{ \mathbb{R}^N}\nabla u^*_\ve(z)\cdot\nabla\psi_e(z) + V_\ve u^*_\ve(z)\psi_e(z) - |u^*_\ve(z)|^{p-1}u^*_\ve(z)\psi_e(z)dy.
\end{aligned}
\end{equation*}
Now the problem to find a solution of \eqref{maineq} reduces to the problem to find a critical point if $I_\ve$ in $H_s$.

From Proposition \ref{Fixed}, for each $h\in S_\ve$ we have  $u_{\ve,h}\in H_s$ satisfying  $\frac{\partial I_{\ve}|_{E_h}(u_{\ve,h})}{\partial u}=0$, where ${u_{\ve,h}}=  {W_{h}}+\phi_{\ve, h}$,  $\phi_{\ve,h} \in E_h$ is the fixed point of $F_{\ve,h}$.
This implies that for each $h\in S_\ve$ there is a Lagrange multiplier $\Lambda_{h}\in\mathbb{R}$ satisfying
\begin{equation}\label{La}
 \int_{\mathbb{R}^N} \nabla {u_{\ve,h}}\cdot \nabla\psi- V_{\ve}{u_{\ve,h}}\psi + |{u_{\ve,h}}|^{p-1}u_h\psi \:dy = \int_{\mathbb{R}^N}\Lambda_{h}\left( \sum_{i=1}^4  U_{h,i}^{p-1} \frac{\partial U_{h,i}}{\partial h}\right) \psi \: dy,
\end{equation}
{for all $\psi \in H_s$}.
We define the function
\begin{equation*}\label{def_F}
\mathfrak{F}_{\ve}:~~ S_\ve \ni h ~~ \mapsto ~~  I_{\ve}(u_{\ve,h}).
\end{equation*}
We recall that $\|\phi_h\|\le \ve^\gamma$, $\phi_h\in E_h$, and the equation for $\frac{\partial W_h}{\partial h}$ in \eqref{peq_wh}.
Then $\mathfrak{F}_{\ve}^{\prime}(h^*)=0$ implies that $u_{\ve,h^*}$ is a critical point of $I_\ve$ in $H_s$. 
To complete the proof of Theorem \ref{mainthm}, it suffices to show that the maximization problem $\displaystyle\max _{h \in {S}_{\ve}} \mathfrak{F}_{\ve}(h)$ is achieved by an interior point of ${S}_{\ve}$.

To consider the maximization problem, we first recall from \eqref{l0} that
\begin{equation*}
\begin{aligned} \left<L_{\varepsilon, h}(\phi),\psi\right> = \int_{ \mathbb{R}^N}\left( \nabla \phi\cdot\nabla\psi + V_\ve\phi\psi - p  \left(   {{W}_{h}}\right)^{p-1} \phi\psi\right)dy,
\end{aligned}
\end{equation*}
and define \begin{equation*}l_{\varepsilon,h}(\phi):=\int_{\mathbb{R}^N} (V_{\ve}(y)-1){W}_{h} \phi+\left(\sum_{i=1}^4 (U_{h,i})^p- ({W}_{h})^p\right)  \phi dy,\end{equation*} and
\begin{equation*}
\begin{aligned}
R(\phi)&:=\frac{1}{p+1}\int_{\mathbb{R}^{N}}  ({W}_{h})^{p+1}+  (p+1)({W}_{h})^p\phi+\frac{p(p+1)}{2}({W}_{h})^{p-1} (\phi)^{2}  dy\\&\   -\frac{1}{p+1}\int_{\mathbb{R}^{N}}  ({W}_{h}+\phi)^{p+1}   d y.
\end{aligned}
\end{equation*}
 Then we note that
\begin{equation}\begin{aligned}
\mathfrak{F}_{\ve}(h)&=I_{\ve}\left( {W_{h}}\right)+l_{\varepsilon,h}(\phi_h)+\frac{1}{2}<L_{\varepsilon,h}(\phi_h),\phi_{h}>+R\left(\phi_{h}\right).
\end{aligned}\end{equation}

We show that $I_{\ve}\left( {W_{h}}\right)$ is the leading order contribution as $\ve \to 0$ and is the only relevant term for the maximization problem.
\begin{lemma}\label{error}There is a constant  $
c_{{\sigma}}>0$, independent of $\ve, h>0$, satisfying
\begin{equation*}\begin{aligned}&\left|l_{\varepsilon,h}(\phi_h)\right|+ \frac{1}{2} \left|<L_{\varepsilon,h}(\phi_h),\phi_{h}>\right|+\left|R\left(\phi_{h}\right)\right|\\&\le c_{{\sigma}}  \Big( \frac{\ve^{2}}{ {h}^{2m } }+e^{-2{\sigma} {\sqrt{3} h}}
+e^{-2\sqrt{2}   {{\eta}_{1}} h}
+e^{-2\sqrt{2}   {{\eta}_{2}p} h}\Big),\end{aligned}\end{equation*}
where  ${\sigma} \in\left(\sqrt{\frac{2}{3}},1\right)$,    ${\eta}_{1}\in\left(1,\min\{p,2\}\right)$, and  ${\eta}_{2}\in\left(\frac{1}{p},1\right)$.
\end{lemma}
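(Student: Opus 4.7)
The plan is to first upgrade the a priori bound $\|\phi_h\|\le \ve^{\gamma}$ to the sharp estimate
\begin{equation*}
\|\phi_h\| \le C_{\sigma}\Big(\tfrac{\ve}{h^{m}} + e^{-\sigma\sqrt{3}\,h} + e^{-\sqrt{2}\eta_1 h} + e^{-\sqrt{2}\eta_2 p\,h}\Big),
\end{equation*}
and then to show that each of the three quantities $l_{\varepsilon,h}(\phi_h)$, $\langle L_{\varepsilon,h}\phi_h,\phi_h\rangle$, and $R(\phi_h)$ is dominated by the product of this bound with another copy of the same quantity, yielding the claimed squared estimate. The upgraded norm bound follows directly from the fixed point identity $\phi_h = F_{\varepsilon,h}(\phi_h)$ combined with inequality \eqref{p2} in the proof of Proposition \ref{Fixed}: since $\|\phi_h\|^{\min\{2,p\}}\ll \|\phi_h\|$, the nonlinear term in the right hand side of \eqref{p2} can be absorbed, leaving only the contributions from the potential and the interaction errors.

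For $l_{\varepsilon,h}(\phi_h)$, I would apply Cauchy--Schwarz directly and split the integral into the potential piece $\int (V_\ve-1)W_h\phi_h$ and the interaction piece $\int\bigl(\sum U_{h,i}^p - W_h^p\bigr)\phi_h$. The first is bounded by $\|(V_\ve-1)W_h\|_{L^2}\|\phi_h\|$, and the $L^2$ norm is estimated exactly as in \eqref{1e}--\eqref{2e} of the Proposition, giving $C\ve/h^m$. The second is bounded, after symmetrization onto $\C_1$, precisely as in the estimate \eqref{atleast2}, giving $C(e^{-\sqrt{2}\eta_1 h}+e^{-\sqrt{2}\eta_2 p h})$. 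Combining with the upgraded bound for $\|\phi_h\|$ gives a product of two terms, each of which is (at worst) the square root of the right-hand side of the lemma.

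For $\langle L_{\varepsilon,h}\phi_h,\phi_h\rangle$, the key trick is to use the fixed-point equation $L_{\varepsilon,h}\phi_h = \Gamma_{\varepsilon,h}(\phi_h)$ and the definition of $\Gamma_{\varepsilon,h}$ to rewrite
\begin{equation*}
\langle L_{\varepsilon,h}\phi_h,\phi_h\rangle = \langle \Gamma_{\varepsilon,h}(\phi_h),\phi_h\rangle = \int_{\mathbb{R}^N} g_{\varepsilon,h}(\phi_h)\,\phi_h\,dy,
\end{equation*}
so this quantity is estimated by exactly the same bounds \eqref{1e}--\eqref{atleast2} used for $\|F_{\varepsilon,h}(\phi_h)\|$, multiplied by $\|\phi_h\|$, and then the upgraded norm bound closes the estimate. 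For $R(\phi_h)$, I would apply Taylor's formula to the map $s\mapsto \tfrac{1}{p+1}(W_h+s\phi_h)^{p+1}$ and argue as in the dichotomy between Cases~1 and 2 of Proposition~\ref{Fixed} to obtain
\begin{equation*}
|R(\phi_h)| \le C\int_{\mathbb{R}^N}\bigl(W_h^{p-2}|\phi_h|^3 + |\phi_h|^{p+1}\bigr)\,dy,
\end{equation*}
(with the first term absent when $p<2$), and then Sobolev embedding bounds this by $C\|\phi_h\|^{\min\{3,p+1\}}\le C\|\phi_h\|^2$ since $\min\{3,p+1\}>2$ and $\|\phi_h\|$ is small.

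The main technical obstacle will be the pointwise estimation of $(W_h)^p - \sum_i U_{h,i}^p$ on $\C_1$ done in \eqref{atleast2}, where the subdivision of $\mathbb{R}^N$ into the cones $\C_i$ and Lemma~\ref{ksum} are essential so that the satellite bumps $U_{h,i}$, $i\neq 1$, are exponentially dominated by $U_{h,1}$; tracking the Taylor expansion carefully in both the $p\ge 2$ and $p<2$ cases is what produces the two different decay rates $e^{-\sqrt{2}\eta_1 h}$ and $e^{-\sqrt{2}\eta_2 p\,h}$ appearing in the statement. Once this interaction estimate is in hand, the remaining steps are routine Cauchy--Schwarz and Taylor remainder arguments.
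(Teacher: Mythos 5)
Your proposal is correct and matches the paper's argument in all essentials: both rely on the upgraded norm bound $\|\phi_h\|\lesssim \ve/h^m+e^{-\sigma\sqrt{3}h}+e^{-\sqrt{2}\eta_1 h}+e^{-\sqrt{2}\eta_2 p h}$ obtained from the fixed-point identity and the estimate behind \eqref{p2}, the Cauchy--Schwarz bound on $l_{\varepsilon,h}(\phi_h)$ via \eqref{1e}--\eqref{2e} and \eqref{atleast2}, and Taylor-remainder bounds for $R(\phi_h)$. The only cosmetic difference is that for the middle term you rewrite $\langle L_{\varepsilon,h}\phi_h,\phi_h\rangle=\int g_{\varepsilon,h}(\phi_h)\phi_h$ via the fixed-point equation, whereas the paper simply invokes the boundedness of $L_{\varepsilon,h}$ (Lemma \ref{inu00}) to get $|\langle L_{\varepsilon,h}\phi_h,\phi_h\rangle|\le C\|\phi_h\|^2$ directly; both give the same bound.
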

\begin{proof}
From the estimations \eqref{1e}, \eqref{2e}, and \eqref{atleast2}, we obtain that for ${\sigma} \in\left(\sqrt{\frac{2}{3}},1\right)$,    ${\eta}_{1}\in\left(1,\min\{p,2\}\right)$, and  ${\eta}_{2}\in\left(\frac{1}{p},1\right)$
\begin{equation}\begin{aligned} \label{1ee}
 &\left|l_{\varepsilon,h}(\phi_h)\right|   \le C_{{\sigma},\eta_1,{\eta}_{2}} \|\phi_h\| \Big\{ \frac{\ve}{ {h}^{m } }+e^{-{\sigma} h\sqrt{3}}
+e^{-\sqrt{2}   {{\eta}_{1} } h}
+e^{-\sqrt{2}   {{\eta}_{2} p} h}\Big\},
\end{aligned}\end{equation}where $ C_{{\sigma},\eta_1,{\eta}_{2}} >0$ is a constant, independent of $\ve, h>0$.

By Lemma \ref{inu00},  there is a constant $C>0$, independent of $\ve, h>0$, satisfying
\begin{equation*}\begin{aligned} \label{2ee}
 &\left|<L_{\varepsilon,h}(\phi_h),\phi_{h}>\right|   \le C\|\phi_h\|^2.
\end{aligned}\end{equation*}

Moreover,  Using similar arguments in \eqref{p222} and \eqref{p22}, we obtain that if $\ve>0$ is sufficiently small, there are constants $C_1, C_2>0$, independent of $\ve, h>0$, satisfying
\begin{equation*}\begin{aligned} \label{3ee}
 & \left|R\left(\phi_{h}\right)\right|  \le C_1\left(\|\phi_h\|^2+\|\phi_h\|^{p+1}\right)\le C_2 \|\phi_h\|^2.
\end{aligned}\end{equation*}

Notably, if $\phi_h$ is a fixed point of $F_{\ve,h}$, \eqref{p2} implies that  there is a  constant $c_{{\sigma}}'>0$, independent of $\ve, h>0$, satisfying

\begin{equation}\begin{aligned} \label{3eee}
 &  \|\phi_h\|=\|F_{\ve,h}(\phi_h) \| \le c_{{\sigma}}' \Big\{ \frac{\ve}{ {h}^{m } }+e^{-{\sigma} {\sqrt{3} h}}
+e^{-\sqrt{2}   {{\eta}_{1}} h}
+e^{-\sqrt{2}   {{\eta}_{2}p} h}\Big\},
\end{aligned}\end{equation}

From \eqref{1ee}-\eqref{3eee}, we complete the proof of Lemma \ref{error}.
\end{proof}

Now we expand the main term of the energy functional.
\begin{proposition}\label{A.3} We have as $\ve\to0$,
\begin{equation*}\begin{aligned}
I_{\ve}\left( {W_{h}}\right)
&= \frac{2(p-1)}{(p+1)} \int_{\mathbb{R}^{N}} (U_0)^{p+1}dy+ \frac{2a\ve}{|ht_1|^m}\int_{\mathbb{R}^N}U_0^2dy
 - \mathbf{J_{*}}(h)h^{-\frac{N-1}{2}}e^{-2\sqrt{2}h}\\&+o\left(\frac{\ve}{h^m}\right)+o\left(h^{-\frac{N-1}{2}}e^{-2\sqrt{2}h}\right),\end{aligned}
\end{equation*}
where     $\mathbf{J_{*}}(h)$ satisfies
$0< B_0\le \mathbf{J_{*}}(h)\le B_1$ {for some constants $B_0$ and $B_1$, which  are independent of $\ve, h>0$.}
\end{proposition}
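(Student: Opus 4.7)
The plan is to decompose $I_\ve(W_h)$ into three contributions---the self-energy of the four bumps (yielding the constant term $\frac{2(p-1)}{p+1}\int U_0^{p+1}$), the potential perturbation (yielding $\frac{2a\ve}{|ht_1|^m}\int U_0^2$), and the pairwise interactions (yielding $-\mathbf{J_*}(h)h^{-(N-1)/2}e^{-2\sqrt{2}h}$)---and to extract each leading term using Lemma \ref{ksum} and the sharp asymptotic \eqref{exp0}. The first is organized cleanly via \eqref{eq_wh}: multiplying $\Delta W_h - W_h + \sum_i U_{h,i}^p = 0$ by $W_h$ and integrating by parts gives
\begin{equation*}
 \tfrac{1}{2}\!\int\!|\nabla W_h|^2 + W_h^2\,dy = 2\!\int\! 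U_0^{p+1}\,dy + 6\,\mathcal{I}(2\sqrt{2}h),
\end{equation*}
where $\mathcal{I}(r):=\int U_0(z)^p U_0(z-w)\,dz$ with $|w|=r$ is well-defined by the radial symmetry of $U_0$, and every pair $i\neq j$ satisfies $|t_i-t_j|=2\sqrt{2}$, so all twelve cross integrals $\int U_{h,i}^p U_{h,j}$ coincide with $\mathcal{I}(2\sqrt{2}h)$.

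For $-\frac{1}{p+1}\int W_h^{p+1}$, I expand using the subdivision $\mathbb{R}^N=\bigcup_i \mathcal{C}_i$: on $\mathcal{C}_i$ the bump $U_{h,i}$ dominates (Lemma \ref{ksum}), so Taylor's theorem yields $W_h^{p+1} = U_{h,i}^{p+1} + (p+1)U_{h,i}^p V_i + R_i$ with $V_i:=\sum_{j\neq i}U_{h,j}$ and $|R_i|\le C\bigl(U_{h,i}^{p-1}V_i^2 + V_i^{p+1}\bigr)$. Choosing $\eta\in(1,\min\{p,2\}]$ in Lemma \ref{ksum} for the first remainder and $\eta\in(2/(p+1),1)$ for the second makes $\int|R_i|\,dy$ truly $o(h^{-(N-1)/2}e^{-2\sqrt{2}h})$. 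Because the integrands $U_{h,i}^{p+1}$ and $U_{h,i}^p U_{h,j}$ concentrate at $y=ht_i$, which lies at distance $\Theta(h)$ from $\partial\mathcal{C}_i$, the cone-leakage errors are super-exponentially small; summing yields $\int W_h^{p+1} = 4\int U_0^{p+1} + 12(p+1)\,\mathcal{I}(2\sqrt{2}h) + o(\cdots)$, so the combined kinetic-plus-nonlinear contribution equals $\frac{2(p-1)}{p+1}\int U_0^{p+1} - 6\,\mathcal{I}(2\sqrt{2}h) + o(h^{-(N-1)/2}e^{-2\sqrt{2}h})$. Meanwhile, for the potential perturbation $\frac{\ve}{2}\int V_1 W_h^2$, the cross pieces $\frac{\ve}{2}\int V_1 U_{h,i}U_{h,j}$ are exponentially small in $h$; each diagonal piece equals $\frac{\ve}{2}\int V_1(z+ht_i)U_0(z)^2\,dz$ after $z=y-ht_i$, and splitting as in \eqref{1e}--\eqref{2e} together with (A2) (so $V_1(z+ht_i)=a|ht_i|^{-m}(1+o(1))+O(|ht_i|^{-m-\theta})$ uniformly on the bulk) gives $\frac{a\ve}{2|ht_i|^m}\int U_0^2 + o(\ve/h^m)$, which sums to the stated $\frac{2a\ve}{|ht_1|^m}\int U_0^2$ since $|ht_i|=|ht_1|$.

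Finally, the asymptotic of $\mathcal{I}$ follows from the sharp expansion
\begin{equation*}
 U_0(z-w)=\alpha|w|^{-(N-1)/2}e^{-|w|}\,e^{z\cdot\hat w}\bigl(1+o(1)\bigr)
\end{equation*}
as $|w|\to\infty$ uniformly for $z$ in compact sets (a consequence of \eqref{exp0} together with $|z-w|=|w|-z\cdot\hat w+O(|z|^2/|w|)$). Integrating against $U_0^p$ (convergent since $p>1$ forces $U_0^p(z)e^{z\cdot\hat w}\le Ce^{(1-p)|z|}$) gives $\mathcal{I}(r)=c_0\,r^{-(N-1)/2}e^{-r}(1+o(1))$ for some constant $c_0>0$ independent of $\hat w$. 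Setting $\mathbf{J_*}(h):=6\,\mathcal{I}(2\sqrt{2}h)\,h^{(N-1)/2}e^{2\sqrt{2}h}$ then gives $\mathbf{J_*}(h)\to 6c_0(2\sqrt{2})^{-(N-1)/2}>0$ as $h\to\infty$, hence positive constants $B_0\le B_1$ with $B_0\le \mathbf{J_*}(h)\le B_1$ on $S_\ve$. The principal technical obstacle is arranging the Taylor-remainder and cone-boundary-leakage estimates so that they are genuinely $o(h^{-(N-1)/2}e^{-2\sqrt{2}h})$ rather than merely of comparable order; this is precisely what forces $\eta>1$ (resp. $\eta>2/(p+1)$) in Lemma \ref{ksum}, and leverages the polynomial factor $\min\{|y|^{-(N-1)/2},1\}$ in \eqref{exp}.
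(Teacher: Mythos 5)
Your decomposition and the algebra are essentially identical to the paper's: the energy is split via \eqref{eq_wh} into the potential contribution, the ``half-kinetic'' term $\frac12\int(\sum_i U_{h,i}^p)W_h$, and $-\frac{1}{p+1}\int W_h^{p+1}$; the potential term is treated exactly as in Lemma \ref{lemma_mu}; and the interaction term $-6\mathcal{I}(2\sqrt{2}h)$ matches the paper's $-2\int_{\mathcal{C}_1}U_{h,1}^p\sum_{i\ge2}U_{h,i}$ up to a cone-leakage error that both arguments absorb into $o(h^{-(N-1)/2}e^{-2\sqrt2 h})$.

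Where you genuinely go further is in the treatment of $\mathbf{J_*}$: by invoking the sharp asymptotic \eqref{exp0} in the form $U_0(z-w)\sim\alpha|w|^{-(N-1)/2}e^{-|w|}e^{z\cdot\hat w}$, you obtain a limit $\mathbf{J_*}(h)\to 6c_0(2\sqrt2)^{-(N-1)/2}$, whereas the paper only proves the two-sided bound $B_0\le\mathbf{J_*}(h)\le B_1$ required by the statement. This is a strictly stronger (and clean) conclusion, and the estimate $U_0^p(z)e^{z\cdot\hat w}\lesssim e^{(1-p)|z|}$ does justify dominated convergence once you split $|z|\lesssim|w|$ from $|z|\gtrsim|w|$ -- a point you leave implicit but which is routine.

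One technical slip: for the quadratic remainder $U_{h,i}^{p-1}V_i^2$, integrability on $\mathcal{C}_i$ requires $(p-1)+2(1-\eta)>0$, i.e.\ $\eta<\frac{p+1}{2}$; together with $\eta>1$ to beat the main exponential, the admissible range is $\eta\in\left(1,\min\{\frac{p+1}{2},2\}\right)$ (the paper's $\eta_6$), not $\left(1,\min\{p,2\}\right]$ as you wrote. For $1<p<3$ your range is strictly too wide (e.g.\ $p=2$, $\eta=2$ makes the spatial integral divergent). This is easily corrected and does not affect the structure of the argument.
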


Before we prove the Proposition \ref{A.3}, we show the following Lemma.
\begin{lemma}\label{lemma_mu} If $h \in S_{\ve}$, then
\[\int_{\mathbb{R}^N}({V}_{\ve}(y)-1) U_{h,1}^{2}dy=\frac{a\ve}{|ht_1|^m}\int_{\mathbb{R}^N}U_0^2dy+O\left(\frac{\ve}{{h}^{m+1}}+\frac{\ve}{{h}^{m+\theta}}+e^{-2{\sigma} {\sqrt{3} h}}\right),\] where ${\sigma}\in\left(\sqrt{\frac{2}{3}},1\right)$. \end{lemma}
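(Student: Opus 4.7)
The plan is to first rewrite the integral by a translation and then split the domain into a ``near'' piece where the asymptotic (A2) is available and a ``far'' piece that is controlled by the exponential decay of $U_0$. Since $V_\ve(y)-1 = \ve V_1(|y|)$ and $U_{h,1}(y) = U_0(y-ht_1)$, the substitution $z=y-ht_1$ turns the left hand side into
\[
\ve\int_{\mathbb{R}^N} V_1(|z+ht_1|)\,U_0(z)^2\,dz.
\]
I would fix $\sigma\in\bigl(\sqrt{2/3},1\bigr)$ and decompose $\mathbb{R}^N = B\cup B^c$ with $B=B_{\sigma h|t_1|}(0)$. On $B^c$, boundedness of $V_1$ together with the pointwise bound \eqref{exp} gives
\[
\ve\int_{B^c}|V_1(|z+ht_1|)|\,U_0(z)^2\,dz \;\le\; C\ve \int_{|z|\ge \sigma h|t_1|}e^{-2|z|}\,dz \;=\; O\bigl(e^{-2\sigma\sqrt{3}\,h}\bigr),
\]
using $|t_1|=\sqrt{3}$ and absorbing the factor $\ve$ into the exponential.

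On $B$, one has $|z+ht_1|\ge (1-\sigma)h|t_1|\to\infty$ as $h\to\infty$, so (A2) applies and
\[
V_1(|z+ht_1|) \;=\; \frac{a}{|z+ht_1|^m} + O\!\left(\frac{1}{|z+ht_1|^{m+\theta}}\right).
\]
The remainder term contributes $O\bigl(\ve/h^{m+\theta}\bigr)$ after integration against $U_0^2$ (which is integrable). For the leading term I would Taylor-expand $|z+ht_1|^{-m}$ around $z=0$: writing $|z+ht_1| = |ht_1|\,(1+\langle z,ht_1\rangle/|ht_1|^2+\cdots)^{1/2}$ yields
\[
\frac{1}{|z+ht_1|^m} \;=\; \frac{1}{|ht_1|^m} + O\!\left(\frac{|z|}{h^{m+1}}\right)
\]
uniformly on $B$ (a routine estimate, since for $|z|\le \sigma h|t_1|$ the quantity $|z+ht_1|/|ht_1|$ is bounded above and below). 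Integrating the constant term over $B$ against $U_0^2$ and then extending back to all of $\mathbb{R}^N$ (the tail contribution $\int_{B^c}U_0^2$ is already $O(e^{-2\sigma\sqrt{3}h})$, absorbed into the last error term) gives
\[
\frac{a\ve}{|ht_1|^m}\int_{\mathbb{R}^N}U_0^2\,dy,
\]
while the Taylor remainder produces $O\bigl(\ve/h^{m+1}\bigr)$ because $\int |z|\,U_0(z)^2\,dz<\infty$.

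Collecting the three error sources — the Taylor remainder in $|z+ht_1|^{-m}$, the $|z+ht_1|^{-m-\theta}$ correction from (A2), and the $B^c$ tail — yields the claimed expansion. The only mildly delicate point is checking that the Taylor expansion of $|z+ht_1|^{-m}$ really is uniform on $B$ with remainder of order $|z|/h^{m+1}$, but since $|z+ht_1|\ge (1-\sigma)h|t_1|$ on $B$ this is a direct computation with the mean value theorem applied to $r\mapsto r^{-m}$.
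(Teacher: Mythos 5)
Your proof is correct and follows essentially the same approach as the paper's: translate by $ht_1$, split the domain into $B_{\sigma h|t_1|}(0)$ and its complement, use the exponential decay of $U_0$ on the far region, and on the near region use (A2) together with a mean-value (Taylor) estimate on $r\mapsto r^{-m}$ to peel off the leading term $a\ve/|ht_1|^m$ with an $O(\ve|z|/h^{m+1})$ remainder. The decomposition radius, the error terms, and the key estimates all match the paper's argument.
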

\begin{proof}Recall that $V_\ve(y)-1=V_1(y)$. Fix a constant ${\sigma}\in\left(\sqrt{\frac{2}{3}},1\right)$.
We see that for sufficiently small $\ve>0$, there are constants $c_1,c_2>0$, independent of $\ve>0$, satisfying
\begin{align*}
 \left|{V}_1(y+{ht_1})-\frac{a\ve}{|ht_1|^m}\right| &\le a\ve \left| \frac{1}{|y+{ht_1}|^m} - \frac{1}{|ht_1|^m}\right| + c_1 \frac{ \ve}{|y+{ht_1}|^{   {m+\theta}}} \\
 &\le c_2\ve\left(\frac{|y|}{|ht_1|^{m+1}} + \frac{1}{|y+{ht_1}|^{   {m+\theta}}}\right)\quad\textrm{for}\ \ y\in  B_{{\sigma} h |t_1|}(0).
\end{align*}
Therefore, we have
\begin{equation*}\label{mu1}
\begin{aligned}
&\int_{\mathbb{R}^N}\left({V}_{\ve}-1\right) U_{h,1}^{2}dy -\frac{a\ve}{|ht_1|^m}\int_{\mathbb{R}^N}U_0^2dy\\& =  \ve\int_{\mathbb{R}^N}\left({V}_{1}(y+{ht_1})- \frac{a}{|ht_1|^m}\right)U_0^2 \:dy\\
&= \ve\int_{B_{{\sigma} h |t_1|}(0)}\left({V}_{1}(y+{ht_1})- \frac{a}{|ht_1|^m}\right)U_0^2 \:dy + \ve O\left(e^{-2{\sigma} h |t_1|}\right)\\& =  O\left(\frac{\ve}{{h}^{m+1}}+\frac{\ve}{{h}^{m+\theta}}+\ve e^{-2{\sigma} {\sqrt{3} h}}\right).
\end{aligned}
\end{equation*}

\end{proof}

Now, we prove  Proposition \ref{A.3}.
\begin{proof}[Proof of Proposition \ref{A.3}] By the definition of $I_\ve$ in \eqref{energy} and the equation \eqref{eq_wh}, we see that 
\begin{align*}
 I_{\ve}\left( {W_{h}}\right) &=
  \int_{\mathbb{R}^{N}} \frac{\left|\nabla  W_h  \right|^{2}+{V}_{\ve}(y) \left(W_h\right)^{2}}{2}
  -\frac{ \left( W_h \right)^{p+1}}{p+1} d y\\
 &=
  \frac{1}{2} \int_{\mathbb{R}^{N}}\left({V}_{\ve}-1\right) \left(W_h\right)^{2}  d y\\& +\frac{1}{2}\int_{\mathbb{R}^N} \Big( \sum_{i=1}^4 (U_{h,i})^p \Big)W_h d y-\frac{1}{p+1}\int_{\mathbb{R}^N} \left(W_h\right)^{p+1} d y.
 \end{align*}By using $W_h=\sum_{i=1}^4 (U_{h,i})$, we note that
\begin{align*}
&\int_{\mathbb{R}^{N}}({V}_{\ve}-1) \left( W_h\right)^{2}dy\\&=\int_{\mathbb{R}^{N}}({V}_{\ve}-1) \sum_{i=1}^4 (U_{h,i})^2 \:dy +  \int_{\mathbb{R}^N}({V}_{\ve}-1)  \sum_{i\ne j} U_{h,i}U_{h,j}\:dy.
\end{align*}
Because $V$ and $U_0$ are radial symmetric functions, we obtain from Lemma \ref{lemma_mu} that  for $1\le i\le 4$,
\begin{align*}
& \int_{\mathbb{R}^{N}}({V}_{\ve}-1)  \sum_{i=1}^4 (U_{h,i})^2 dy \\&=4\int_{\mathbb{R}^{N}}({V}_{\ve}-1) \left( U_{h,1}\right)^{2} \:dy \\&=\frac{4a\ve}{|ht_1|^m}\int_{\mathbb{R}^N}U_0^2dy+O\left(\frac{\ve}{{h}^{m+1}}+\frac{\ve}{{h}^{m+\theta}}+e^{-2{\sigma} {\sqrt{3} h}}\right),
\end{align*}where ${\sigma}\in\left(\sqrt{\frac{2}{3}},1\right)$.
Using the radial symmetric property of $V$ and $U_0$, and Lemma \ref{ksum}, there are constants $c_1,c_2,c_3>0$, independent of $\ve, h>0$, satisfying
\begin{align*}
& \left|\int_{\mathbb{R}^N}({V}_{\ve}-1)  \sum_{i\ne j} U_{h,i}U_{h,j}\:dy \right| =  4\left|\int_{{\C}_1}({V}_{\ve}-1)  \sum_{i\ne j} U_{h,i}U_{h,j}\:dy \right|\\&\le c_1 \int_{{\C}_{1}}|{V}_{\ve}(y)-1|  \Big(\sum_{j= 2}^4U_{h,j}\Big)\Big(\sum_{i=1}^4 U_{h,i}\Big)\:dy\\
&\le c_2 e^{-\sqrt{2}{\eta}_{3} h}  \int_{{\C}_{1}}|{V}_{\ve}(y)-1|  e^{-(2-{\eta}_{3})|y-{ht_1}|} \:dy\\
&\le c_3 e^{-\sqrt{2}{\eta}_{3} h} \left(\frac{\ve}{{h}^m}+ e^{-(2-{\eta}_{3}){\sigma} {\sqrt{3} h}} \right) \quad \text{for some ${\eta}_{3} \in (0,2)$ and ${\sigma}\in\left(\sqrt{\frac{2}{3}},1\right)$},
\end{align*}
whhere, in the last inequality, we used the arguments in  the proof of Lemma \ref{lemma_mu} for $e^{-(2-{\eta}_{3})|y-ht_1|}$ in place of $(U_{h,1})^2$.

Moreover, using  the radial symmetry of $U_0$, we   obtain that
\begin{align*}
&\frac{1}{2}\int_{\mathbb{R}^{N}} \Big( \sum_{i=1}^{4} ( U_{h,i})^{p}\Big) {W}_{h}dy -\frac{1}{p+1}\int_{\mathbb{R}^{N}} \left(W_h\right)^{p+1}\:dy\\
&=\left(\frac{1}{2}-\frac{1}{p+1}\right)\int_{\mathbb{R}^{N}}  \sum_{i=1}^{4} ( U_{h,i})^{p+1}  dy
\\& +\frac{1}{2}\int_{\mathbb{R}^{N}} \Big( \sum_{i=1}^{4} ( U_{h,i})^{p}\Big) {W}_{h}-  \sum_{i=1}^{4} ( U_{h,i})^{p+1} dy
\\&  -\frac{1}{p+1}\int_{\mathbb{R}^{N}} \left(W_h\right)^{p+1}-   \sum_{i=1}^{4} ( U_{h,i})^{p+1} \:dy
\\&=  \frac{2(p-1)}{p+1}\int_{\mathbb{R}^{N}}   (U_{0})^{p+1}  dy  -2\int_{{\C}_1}   ( U_{h,1})^{p}\Big( \sum_{i=2}^{4}  U_{h,i} \Big)dy +K_{h}, \end{align*}
where
\begin{align*} K_h &:= 2\int_{{\C}_1}    \big( \sum_{i=2}^{4} ( U_{h,i})^{p}\big) {W}_{h}-  \sum_{i=2}^{4} ( U_{h,i})^{p+1} dy
 \\&-4\int_{{\C}_1}   \frac{ p}{2}\big(U_{h,1}+\lambda\sum_{i=2}^4 U_{h,i}\big)^{p-1}\big( \sum_{i=2}^{4}U_{h,i} \big)^2-  \frac{1}{p+1} \sum_{i=2}^{4} ( U_{h,i})^{p+1} \:dy,\end{align*} here, $\lambda\in(0,1)$.
 By  Lemma \ref{ksum}, we see that for ${\eta}_{4}\in\left(\frac{2}{p},1+\frac{1}{p}\right)$,  ${\eta}_{5}\in\left(\frac{2}{p+1},1\right)$, and ${\eta}_{6}\in\left(1, \min \{\frac{p+1}{2},2\}\right)$,
\begin{align*}   &|K_h|\\&\le c_1 \int_{{\C}_1} \big( \sum_{i=2}^{4}   U_{h,i} \big)^{p }  \big(\sum_{i=1}^4 U_{h,i}\big)+   \big( \sum_{i=2}^{4}  U_{h,i} \big)^{p+1}  +\big(\sum_{i=1}^4 U_i\big)^{p-1}\big( \sum_{i=2}^{4}U_{h,i} \big)^2   dy
\\&\le c_2\Big( \int_{{\C}_1} e^{-\sqrt{2}{\eta}_{4}ph}e^{-\{(1-{\eta}_{4})p+1\}|y-ht_1|} +e^{-\sqrt{2}{\eta}_{5}(p+1)h}e^{-(1-{\eta}_{5})(p+1)|y-ht_1|}\\&+e^{-2\sqrt{2}{\eta}_{6}h} e^{-\{(p-1)+2(1-{\eta}_{6})\}|y-ht_1|}dy\Big)
\\&\le c_3\left( e^{-\sqrt{2}{\eta}_{4}ph} +e^{-\sqrt{2}{\eta}_{5}(p+1)h}+e^{-2\sqrt{2}{\eta}_{6}h}\right).
\end{align*}
 In summary,  we have
\begin{align*}
I_{\ve}\left( {W_{h}}\right)
 &= \frac{2(p-1)}{p+1}\int_{\mathbb{R}^{N}}   (U_{0})^{p+1}  dy  -2\int_{{\C}_1}   ( U_{h,1})^{p}\Big( \sum_{i=2}^{4}  U_{h,i} \Big)dy\\&+\frac{2a\ve}{|ht_1|^m}\int_{\mathbb{R}^N}U_0^2dy+O\left(\frac{\ve}{{h}^{m+1}}+\frac{\ve}{{h}^{m+\theta}}+e^{-2{\sigma} {\sqrt{3} h}}\right)
 \\&+O\left(e^{-\sqrt{2}{\eta}_{3} h}\frac{\ve}{{h}^m}+ e^{-\sqrt{2}{\eta}_{3} h -(2-{\eta}_{3}){\sigma} {\sqrt{3} h}} \right)
 \\&+O\left( e^{-\sqrt{2}{\eta}_{4}ph} +e^{-\sqrt{2}{\eta}_{5}(p+1)h}+e^{-2\sqrt{2}{\eta}_{6}h}\right),  \end{align*}where ${\sigma}\in\left(\sqrt{\frac{2}{3}},1\right)$,   ${\eta}_{3} \in (0,2)$, ${\eta}_{4}\in\left(\frac{2}{p},1+\frac{1}{p}\right)$,  ${\eta}_{5}\in\left(\frac{2}{p+1},1\right)$, and ${\eta}_{6}\in\left(1, \min \{\frac{p+1}{2},2\}\right)$.
Finally, we observe that $|ht_{1}-ht_{i}|=2\sqrt{2}h$, $i=2,3,4$, and
\begin{align*}
&2\int_{{\C}_1}   ( U_{h,1})^{p}\Big( \sum_{i=2}^{4}  U_{h,i} \Big)dy\\
& = 2\sum_{i=2}^{4} \left(e^{-\left|ht_{1}-ht_{i}\right|}|{ht_1}-{ht_i}|^{-\frac{N-1}{2}}\right) \int_{{\C}_1} (U_{h,1})^{p} U_{h,i}  \left(e^{\left|ht_{1}-ht_{i}\right|}|{ht_1}-{ht_i}|^{\frac{N-1}{2}}\right) \:dy
\\&=2e^{-2\sqrt{2}h}\left(2\sqrt{2}h\right)^{-\frac{N-1}{2}} \sum_{i=2}^{4}\int_{{\C}_1} (U_{h,1})^{p} U_{h,i}  \left(e^{\left|ht_{1}-ht_{i}\right|}|{ht_1}-{ht_i}|^{\frac{N-1}{2}}\right) \:dy\\&= \mathbf{J_{*}}(h)h^{-\frac{N-1}{2}}e^{-2\sqrt{2}h}, \end{align*}
  where    \[\mathbf{J_*}(h)  = 2 \left(2\sqrt{2} \right)^{-\frac{N-1}{2}} \sum_{i=2}^{4}\int_{{\C}_1} (U_{h,1})^{p} U_{h,i}  \left(e^{\left|ht_{1}-ht_{i}\right|}|{ht_1}-{ht_i}|^{\frac{N-1}{2}}\right) \:dy.\]Now we claim that there are constants  $B_0,B_1>0$,  independent of $\ve, h>0$, satisfying   \begin{equation*}\label{J}0< B_0\le \mathbf{J_{*}}(h)\le B_1.\end{equation*} Because $B_{\sqrt{h}}(ht_1)\subseteq {\C}_1$ by the definition of ${\C}_1$ in \eqref{defo} if $\ve>0$ is sufficiently small, from \eqref{exp0}, there are constants $c_1, c_2, c_3>0$, independent of $\ve,h>0$, satisfying
\begin{align*}&\mathbf{J_*}(h) \\&\ge  c_1 \sum_{i=2}^4\int_{B_{\sqrt{h}}(ht_1)} (U_{h,1})^{p} U_{h,i}  \left(e^{\left|ht_{1}-ht_{i}\right|}|{ht_1}-{ht_i}|^{\frac{N-1}{2}}\right) \:dy\\&= c_1 \sum_{i=2}^4 \int_{B_{\sqrt{h}}(0)} (U_{0}(y))^{p} U_{0}(y+ht_1-ht_i)  \left(e^{\left|ht_{1}-ht_{i}\right|}|{ht_1}-{ht_i}|^{\frac{N-1}{2}}\right) \:dy
\\&\ge  c_2 \sum_{i=2}^4 \int_{B_{\sqrt{h}}(0)} (U_{0}(y))^{p}e^{-|y+ht_1-ht_i|+|ht_{1}-ht_{i}|}\left(\frac{|{ht_1}-{ht_i}|^{\frac{N-1}{2}}}{|y+ht_1-ht_i|^{\frac{N-1}{2}}}\right) \:dy
\\&\ge  c_3  \int_{B_{\sqrt{h}}(0)} (U_{0}(y))^{p}e^{-|y|}  \:dy \ge   c_3 \int_{B_{1}(0)} (U_{0}(y))^{p}e^{-|y|}  \:dy>0.    \end{align*} Moreover, we observe that  there are constants $C_1, C_2, C_3>0$, independent of $\ve, h>0$, satisfying
\begin{align*}\mathbf{J_*}(h) &\le C_1 \sum_{i=2}^4\int_{\mathbb{R}^N}  (U_{0}(y))^{p} U_{0}(y+ht_1-ht_i)  \left(e^{\left|ht_{1}-ht_{i}\right|}|{ht_1}-{ht_i}|^{\frac{N-1}{2}}\right) \:dy
\\&\le  C_2 \sum_{i=2}^4  \int_{\mathbb{R}^N} \Big\{ (U_{0}(y))^{p-1}e^{-|y|-|y+ht_1-ht_i|  +|ht_{1}-ht_{i}|}\\&\ \times\left(\frac{|{ht_1}-{ht_i}|}{\left(1+|y|\right) \left(1+|y+ht_1-ht_i|\right) }\right)^{\frac{N-1}{2}} \Big\}\:dy
\\&\le C_3 \int_{\mathbb{R}^N}   (U_{0}(y))^{p-1}\:dy.  \end{align*} From the above arguments, we complete the proof of Proposition \ref{A.3}.
\end{proof}
Now we are ready to prove Theorem \ref{mainthm}.
\begin{proof}[Proof of Theorem \ref{mainthm}]
By Proposition \ref{A.3} and Lemma \ref{error}, we have that as $\ve\to0$,
\begin{equation*}\begin{aligned}
\mathfrak{F}_{\ve}(h)&={A}_{0}+ \frac{2a\ve}{|ht_1|^m}\int_{\mathbb{R}^N}U_0^2dy
 -\mathbf{J_{*}}(h)h^{-\frac{N-1}{2}}e^{-2\sqrt{2}h}\\&+o\left(\frac{\ve}{h^m}\right)+o\left(h^{-\frac{N-1}{2}}e^{-2\sqrt{2}h}\right),\end{aligned}
\end{equation*}where ${A}_{0}=\frac{2(p-1)}{(p+1)} \int_{\mathbb{R}^{N}} (U_0)^{p+1}dy$. We recall $S_\ve= \left[ \left(\frac{1}{2\sqrt{2}}-\beta_0\right)\ln\frac{1}{\varepsilon},  \left(\frac{1}{2\sqrt{2}}+\beta_0\right)\ln\frac{1}{\varepsilon}\right].$   If  $\ve>0$ is  sufficiently small, the value of $\mathfrak{F}_{\ve}$ on $\partial S_\ve$ is less than  the value of $\mathfrak{F}_{\ve}$ at some interior point of $S_{\ve}$. Indeed, we have for  small $\ve>0$,  \[\mathfrak{F}_{\ve}\Big(\Big(\frac{1}{2\sqrt{2}}-\beta_0\Big)\ln\frac{1}{\varepsilon}\Big)< {A}_{0}<  \mathfrak{F}_{\ve}\Big(\Big(\frac{1}{2\sqrt{2}}+\beta_0\Big)\ln\frac{1}{\varepsilon}\Big)<  \mathfrak{F}_{\ve}\Big(\Big(\frac{1}{2\sqrt{2}}+\frac{\beta_0}{2}\Big)\ln\frac{1}{\varepsilon}\Big), \]; thus,      $\max _{h \in S_{\ve}} \mathfrak{F}_{\ve}(h)$ is achieved by an interior point $h_{\ve}$ of $S_{\ve}$. Therefore,  we $u_{h_\ve}$ is a solution to \eqref{maineq},  completing the proof of Theorem \ref{mainthm}.
\end{proof}

\newpage

\appendix
\section{}\label{AA}

\begin {table} [h!]
\begin {center}
\caption {Multiplication table of group $\{T_i \ | \ 1\le i\le 12\}$}
\label {Table 3}
\begin {tabular} {|c|c|c|c|c|c|c|c|c|c|c|c|c|}
\hline
 & $T_1$ & $T_2$ & $T_3$ & $T_4$ &  $T_5$ & $T_6$ & $T_7$ & $T_8$ & $T_9$ & $T_{10}$ &  $T_{11}$ & $T_{12}$ \\ \hline
$T_1$ & $T_1$ & $T_2$ & $T_3$ & $T_4$ &  $T_5$ & $T_6$ & $T_7$ & $T_8$ & $T_9$ & $T_{10}$ &  $T_{11}$ & $T_{12}$ \\ \hline
$T_2$ & $T_2$ & $T_1$ & $T_4$ & $T_3$ & $T_6$ & $T_5$ & $T_8$ & $T_7$ & $T_{10}$ & $T_{9}$ & $T_{12}$ & $T_{11}$ \\ \hline

$T_{3}$ & $T_{3}$ & $T_{4}$ & $T_{1}$ & $T_{2}$ & $T_{7}$ & $T_{8}$ & $T_{5}$ & $T_{6}$ & $T_{11}$ & $T_{12}$ & $T_{9}$ & $T_{10}$
\\ \hline

$T_{4}$ & $T_{4}$ & $T_{3}$ & $T_{2}$ & $T_{1}$ & $T_{8}$ & $T_{7}$ & $T_{6}$ & $T_{5}$ & $T_{12}$ & $T_{11}$ & $T_{10}$ & $T_{9}$
\\ \hline

$T_{5}$ & $T_{5}$ & $T_{8}$ & $T_{6}$ & $T_{7}$ & $T_{9}$ & $T_{12}$ & $T_{10}$ & $T_{11}$ & $T_{1}$ & $T_{4}$ & $T_{2}$ & $T_{3}$ \\ \hline

$T_{6}$ & $T_{6}$ & $T_{7}$ & $T_{5}$ & $T_{8}$ & $T_{10}$ & $T_{11}$ & $T_{9}$ & $T_{12}$ & $T_{2}$ & $T_{3}$ & $T_{1}$ & $T_{4}$
\\ \hline

$T_{7}$ & $T_{7}$ & $T_{6}$ & $T_{8}$ & $T_{5}$ & $T_{11}$ & $T_{10}$ & $T_{12}$ & $T_{9}$ & $T_{3}$ & $T_{2}$ & $T_{4}$ & $T_{1}$
\\ \hline

$T_{8}$ & $T_{8}$ & $T_{5}$ & $T_{7}$ & $T_{6}$ & $T_{12}$ & $T_{9}$ & $T_{11}$ & $T_{10}$ & $T_{4}$ & $T_{1}$ & $T_{3}$ & $T_{2}$
\\ \hline

 $T_{9}$ & $T_{9}$ & $T_{11}$ & $T_{12}$ & $T_{10}$ & $T_{1}$ & $T_{3}$ & $T_{4}$ & $T_{2}$ & $T_{5}$ & $T_{7}$ & $T_{8}$ & $T_{6}$
\\ \hline

 $T_{10}$ & $T_{10}$ & $T_{12}$ & $T_{11}$ & $T_{9}$ & $T_{2}$ & $T_{4}$ & $T_{3}$ & $T_{1}$ & $T_{6}$ & $T_{8}$ & $T_{7}$ & $T_{5}$
\\ \hline

 $T_{11}$ & $T_{11}$ & $T_{9}$ & $T_{10}$ & $T_{12}$ & $T_{3}$ & $T_{1}$ & $T_{2}$ & $T_{4}$ & $T_{7}$ & $T_{5}$ & $T_{6}$ & $T_{8}$
\\ \hline

 $T_{12}$ & $T_{12}$ & $T_{10}$ & $T_{9}$ & $T_{11}$ & $T_{4}$ & $T_{2}$ & $T_{1}$ & $T_{3}$ & $T_{8}$ & $T_{6}$ & $T_{5}$ & $T_{7}$
\\ \hline
\end {tabular}
\end {center}
\end {table}

\section{}\label{B}
\begin{proof}[The proof of $\psi_n\in H_s$]
We  claim that  $\psi_n\in H_s$. Because $\psi_n(y_1,\cdots,y_N)$ is even for $y_n$, $n>3$, it suffices to show that
\begin{equation}\label{appendix_claim} \psi_n(y)=\psi_n(T_i y)\quad\textrm{for all}\ \  i\in\{1,\cdots,12\}\  \ \textrm{and}\  y\in\mathbb{R}^N.\end{equation}
By the definition of $\psi_n$ on ${\C}_i$, $i=2,3,4$  in \eqref{extend}, and $\cup_{i=1}^4{\C}_i=\mathbb{R}^N$,  it suffices to show that $\psi_n(y)=\psi_n(T_i y)$\ for all $i\in\{1,\cdots,12\}$ and   $y\in{\C}_1$.
By $T_i^{-1}=T_i$ for $i=1,2,3,4$, we see that $\psi_n(y)=\psi_n(T_i y)$ for $y\in{\C}_1$ and $i=1,2,3,4$.  By \eqref{defpsi}, we also have $\psi_n(y)=\psi_n(T_5 y)=\psi_n(T_9 y)$ for $y\in {\C}_1$. Recall that $T_1$, $T_5$, $T_9$ restricted on $\mathcal{C}_1$ are automorphisms. In summary, for $y\in \mathcal{C}_1$,
\begin{align*}
 \psi_n(y) &= \psi_n(T_5y) = \psi_n(T_2T_5y) = \psi_n(T_3T_5y) = \psi_n(T_4T_5y)\\
 &=\psi_n(T_9y) = \psi_n(T_2T_9y) = \psi_n(T_3T_9y) = \psi_n(T_4T_9y).
 \end{align*}
\end{proof}

\bibliographystyle{amsplain}

\end{document}